\newcommand{\RR}{{\mathbb R}}
\newcommand{\CC}{{\mathbb C}}
\newcommand{\re}{\mathbb{R}}
\newcommand{\mR}{\mathbb{R}}
\newcommand{\mC}{\mathbb{C}}
\newcommand{\N}{\mathbb{N}}
\def\af{\alpha}
\newcommand{\bdes}{\begin{description}}
\newcommand{\edes}{\end{description}}
\newcommand{\bal}{\begin{align}}
\newcommand{\eal}{\end{align}}
\newcommand{\bnum}{\begin{enumerate}}
\newcommand{\enum}{\end{enumerate}}
\newcommand{\bit}{\begin{itemize}}
\newcommand{\eit}{\end{itemize}}
\newcommand{\bea}{\begin{eqnarray}}
\newcommand{\eea}{\end{eqnarray}}
\newcommand{\be}{\begin{equation}}
\newcommand{\ee}{\end{equation}}
\newcommand{\baray}{\begin{array}}
\newcommand{\earay}{\end{array}}
\newcommand{\bsry}{\begin{subarray}}
\newcommand{\esry}{\end{subarray}}
\newcommand{\bca}{\begin{cases}}
\newcommand{\eca}{\end{cases}}
\newcommand{\bcen}{\begin{center}}
\newcommand{\ecen}{\end{center}}
\newcommand{\bbm}{\begin{bmatrix}}
\newcommand{\ebm}{\end{bmatrix}}
\newcommand{\bmx}{\begin{matrix}}
\newcommand{\emx}{\end{matrix}}
\newcommand{\bpm}{\begin{pmatrix}}
\newcommand{\epm}{\end{pmatrix}}
\newcommand{\btab}{\begin{tabular}}
\newcommand{\etab}{\end{tabular}}
\newtheorem{theorem}{Theorem}[section]
\newtheorem{prop}[theorem]{Proposition}
\newtheorem{lemma}[theorem]{Lemma}
\newtheorem{cor}[theorem]{Corollary}
\newtheorem{defi}[theorem]{Definition}
\theoremstyle{definition}
\newtheorem{example}[theorem]{Example}
\newtheorem{exm}[theorem]{Example}
\newtheorem{alg}[theorem]{Algorithm}
\newtheorem{remark}[theorem]{Remark}
\newtheorem{define}[theorem]{Definition}
\numberwithin{equation}{section}
\begin{document}
\title[Semidefinite relaxations for semi-infinite polynomial programming]
{Semidefinite Relaxations for semi-infinite polynomial programming}
\author{Li Wang}
\address{Department of Mathematics,
University of California, 9500 Gilman Drive, La Jolla, CA 92093.}
\email{liw022@ucsd.edu}
\author{Feng Guo}
\address{Department of
Mathematics, University of California, 9500 Gilman Drive, La Jolla,
CA 92093.} \email{f1guo@math.ucsd.edu.}

\begin{abstract}
This paper studies how to solve semi-infinite polynomial programming
(SIPP) problems by semidefinite relaxation method. We first
introduce two SDP relaxation methods for solving polynomial
optimization problems with finitely many constraints. Then we
propose an exchange algorithm with SDP relaxations to solve SIPP
problems with compact index set. At last, we extend the proposed
method to
SIPP problems with noncompact index set via homogenization.
Numerical results show that the algorithm is efficient in practice.
\end{abstract}
\maketitle

\section{Introduction}
Consider the {\itshape semi-infinite polynomial programming} (SIPP)
problem:
\begin{equation*}\label{semiinfinite:pp}
(P): \left\{
\begin{aligned}
f^* := \min\limits_{x\in X}&\ f(x) \\
\text{s.t.}&\ g(x,u)\geq0, \ \forall~u\in U,
\end{aligned}
\right.
\end{equation*}
where
\begin{equation*}
\begin{aligned}
X&=\{x\in\RR^n\mid \theta_1(x)\geq 0,\cdots,\theta_{m_2}(x)\geq 0\},\\
U&=\{u\in\RR^p\mid h_1(u)\geq 0,\cdots,h_{m_1}(u)\geq 0\}.\\
\end{aligned}
\end{equation*}
Here $f(x), \theta_i(x)$ are polynomials in $x\in \mathbb{R}^n$,
$h_j(u)$ are polynomials in $u\in \mathbb{R}^p$ and $g(x,u)$ is a
polynomial in $(x,u) \in \mathbb{R}^n\times \mathbb{R}^p$.
Throughout this paper, we assume that $X$  is compact and $U$ is an
infinite index set, i.e., there are infinitely many constraints in
$(P)$. The SIPP problem is a special subclass of the {\itshape
semi-infinite programming} (SIP) which has many applications, e.g.,
Chebyshev approximation, maneuverability problems, some mathematical
physics problems and so on \cite{Hettich1993,Still2007}.

There are various algorithms for SIP problems based on
discretization schemes of $U$, such as central cutting plane method
\cite{cuttingplane}, Newton's method \cite{Still}, SQP methods
\cite{SQPSIP} and the like. Most of algorithms for SIP problems,
however, are only locally convergent or globally convergent under
some strong assumptions, like convexity or linearity, and to the
authors best knowledge, few of them are specially designed for SIPP
problems exploiting features of polynomial optimization problems.
Parpas and Rustem \cite{PR2009} proposed a discretization like
method to solve min-max polynomial optimization problems, which can
be reformulated as SIPP problems. Using a polynomial approximation
and an appropriate hierarchy of semidefinite relaxations, Lasserre
presented an algorithm to solve the {\itshape generalized} SIPP
problems in \cite{Lasserre2011}.

Before introducing the contribution of this paper, we first review
some of the considerable progress recently made in solving
polynomial optimization problems with {\itshape finite} constraints
via sums of squares relaxations, which are typically based on the
Positivstellensatz \cite{Putinar1993}. We define a so-called
quadratic module which is a set of polynomials generated by the
finitely many constraints, to which any polynomials positive over
the feasible set belong. The classic Lasserre's hierarchy
\cite{Las01} is to compute the maximal real number, minus which the
objective lies in the quadratic module. By increasing the order of
the quadratic module, Lasserre's hierarchy results in a sequence of
lower bounds of the global optimum and the asymptotical convergence
is established under the Archimedean Condition. Interestingly,
finite convergence of Lasserre's hierarchy is generic
\cite{NiefiniteLas}. To guarantee the finite convergence of
Lasserre's hierarchy, Nie \cite{Nie2011Jacobian} proposed a refined
SDP relaxation by some ``Jacobian-type'' technique which represents
optimality conditions of the considered polynomial optimization
problem. More importantly, these SDP relaxation methods are global
and the minimizers can be extracted if the flat extension condition
\cite{Curto2005} or more general, flat truncation condition
\cite{Nie2011certify} holds. The aim of this paper is to apply these
SDP relaxation methods to solve SIPP problems.

An efficient method based on discretization scheme for solving SIP
is the exchange method which approaches the optimum in an iterative
manner. Generally speaking, given a finite subset $U_k\subseteq U$
in an iteration, we obtain at least one global minimizer $x^k$ of
$f(x)$ under the associated finitely many constraints and then
compute the global minimum $g^k$ and minimizers $u_1,\ldots,u_t$ of
$g(x^k,u)$ over $U$. If $g^k\ge 0$, stop; otherwise, update
$U_{k+1}=U_k\cup\{u_1,\ldots,u_t\}$ and proceed to the next
iteration. Therefore, to guarantee the success of the exchange
method, the subproblems in each iteration need to be globally solved
and at least one minimizer of each subproblem can be extracted. The
compactness of the index set $U$ is commonly assumed in many
algorithms for SIP problems, which ensure the existence of global
minimizers for constraint subproblem. However, when the constraint
subproblem is nonconvex, globally solving it and extracting global
minimizers are very challenging.

Specializing the exchange method in SIPP problem $(P)$, the
subproblems are polynomial optimization problems with finitely many
constraints, which can be solved exactly by SDP relaxations.
Assuming the index set $U$ is compact, an exchange type method with
SDP relaxations is given in this paper. Numerical experiments show
that this algorithm is efficient in practice. We also apply this
approach to optimization problems with polynomial matrix inequality
and get good numerical performance. If $U$ is noncompact, the
exchange method might fail, see Example \ref{ex::count}. Another
novelty of this paper is that we extend the proposed algorithm to
solve SIPP problems with noncompact $U$. By a technique of
homogenization, we first reformulate the original SIPP problem as a
new one with a compact index set, to which we then apply the
proposed semidefinite relaxation algorithm.  We prove that these two
problems are equivalent under some generic conditions.

The paper is organized as follows. In Section \ref{sec::SDP}, we
introduce two SDP relaxation methods  for solving polynomial
optimization problems with finitely many constraints. In Section
\ref{sec::compact}, we propose a semidefinite relaxation algorithm
to solve SIPP problem $(P)$ with compact index set $U$. In Section
\ref{sec::noncompact}, we consider how to apply the proposed
algorithm to solve SIPP problems with noncompact index set $U$ by
homogenization.

\vspace{8pt} \noindent {\bf Notation.} The symbol $\N$ (resp.,
$\re$, $\mathbb{C}$) denotes the set of nonnegative integers (resp.,
real numbers, complex numbers). For any $t\in \re$, $\lceil t\rceil$
denotes the smallest integer that is not smaller than $t$. For
integer $n>0$, $[n]$ denotes the set $\{1,\cdots,n\}$. For $x \in
\re^n$, $x_i$ denotes the $i$-th component of $x$. For $x \in \re^n$
and $\af \in \N^n$, $x^\af$ denotes $x_1^{\af_1}\cdots x_n^{\af_n}$.
For a finite set $T$, $|T|$ denotes its cardinality. $\mathbb{R}[x]
= \mathbb{R}[x_1,\cdots,x_n]$ denotes the ring of polynomials in
$(x_1,\cdots,x_n)$ with real coefficients.
 For a symmetric matrix
$W$, $W\succeq 0(\succ 0)$ means that $W$ is positive semidefinite
(definite). For any vector $u\in \re^p$, $\| u \|$ denotes the
standard Euclidean 2-norm.

\section{SDP relaxations for polynomial optimization}\label{sec::SDP}
In this section, we study how to solve the following polynomial
optimization problem with finitely many constraints:
\begin{equation}\label{pro::opti}
\left\{
\begin{aligned}
f_{\min}:=\underset{x\in\RR^n}{\min}&\ f(x)\\
\text{s.t.}&\ h_1(x)=\cdots=h_{m_1}(x)=0,\\
&\ g_1(x)\ge 0, \ldots, g_{m_2}(x)\ge 0,
\end{aligned}\right.
\end{equation}
where $f(x),h_i(x),g_j(x)\in\RR[x]$. Based on the
Positivstellensatz, considerable works have recently been done on
solving (\ref{pro::opti}) by means of SDP relaxation. Generally
speaking, these methods relax (\ref{pro::opti}) as a sequence of
SDPs whose optima are lower bounds of $f_{\min}$ and converge to
$f_{\min}$ under some assumptions. We first introduce the classic
Lasserre's SDP relaxation \cite{Las01} and then Nie's Jacobian SDP
relaxation \cite{Nie2011Jacobian} with property of finite
convergence.

\subsection{Lasserre's SDP relaxation}
Denote $K$ as the feasible set of \eqref{pro::opti}. Let
$\mathcal{F}:=\{h_1,\ldots,h_{m_1},g_0,g_1,\ldots,g_{m_2}\}$ and
$g_0=1$. We say a polynomial is SOS if it is a sum of squares of
other polynomials. The $k$-th truncated {\itshape quadratic module}
generated by $\mathcal{F}$ is defined as
\[
Q_k(\mathcal{F}):=\left\{\sum\limits_{j=1}^{m_1}\phi_jh_j+\sum\limits_{i=0}^{m_2}\sigma_ig_i\Bigg|
\begin{aligned}
&\sigma_i\ \text{are SOS},\ \phi_j\in\RR[x],\ \forall~i,j\\
&\deg(\sigma_ig_i)\le 2k,\ \deg(\phi_jh_j)\le 2k
\end{aligned}
\right\}.
\]
The $k$-th Lasserre's SDP relaxation \cite{Las01} for solving
(\ref{pro::opti}) ($k$ is also called the relaxation order) is
\begin{equation}\label{eq::PR}
f_k:=\max~\gamma\quad \text{s.t.}\ f(x)-\gamma\in Q_k(\mathcal{F}).
\end{equation}
The relaxation (\ref{eq::PR}) is equivalent to a semidefinite
program and could be solved efficiently by numerical methods like
interior-point algorithms. Clearly, $f_k\le f_{\min}$ for every $k$
and the sequence $\{f_k\}$ is monotonically increasing.  The
quadratic module generated by $\mathcal{F}$ is
\[
Q(\mathcal{F}):=\bigcup_{k=1}^{\infty}Q_k(\mathcal{F}).
\]
\begin{define}\label{def::AC}
The set $Q(\mathcal{F})$ satisfies the {\itshape Archimedean
Condition} if there exists $\psi\in Q(\mathcal{F})$ such that
inequality $\psi(x)\ge 0$ defines a compact set in $x\in \mR^n$.
\end{define}
Note that the Archimedean Condition implies the feasible set $K$ is
compact but the inverse is not necessarily true. However, for any
compact $K$ we can always ``force'' the associated quadratic module
to satisfy the Archimedean Condition by adding a ``redundant''
constraint, e.g., $\rho-\Vert x\Vert^2\geq 0$ for sufficiently large
$\rho$.

The convergence for Lasserre's hierarchy (\ref{eq::PR}), i.e.,
$\lim_{k\rightarrow\infty}f_k=f_{\min}$, is implied by
Putinar's Positivstellensatz:
\begin{theorem}\emph{(}\cite{PR2009}\emph{)}\label{th::PP}
If a polynomial $p$ is positive on $K$ and the Archimedean Condition
holds, then $p\in Q(\mathcal{F})$.
\end{theorem}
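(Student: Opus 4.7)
The plan is to follow Putinar's original strategy: argue by contradiction using a Hahn--Banach separation of $p$ from the convex cone $Q(\mathcal{F})$, then convert the separating functional into a probability measure supported on $K$, and finally derive a contradiction from $p>0$ on $K$. This reduces the algebraic statement to a moment problem that is solvable precisely because of the Archimedean Condition.

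First I would upgrade the Archimedean Condition from its geometric form (some $\psi\in Q(\mathcal{F})$ with $\{\psi\ge 0\}$ compact) to its algebraic form: for every $q\in\RR[x]$, there exists $N\in\N$ with $N\pm q\in Q(\mathcal{F})$. The key step here is to show $N-\|x\|^2\in Q(\mathcal{F})$ for some $N$; this follows because on the compact set $\{\psi\ge 0\}$ the polynomial $N-\|x\|^2$ is eventually strictly positive, and one can iteratively trade powers of $\psi$ for SOS multipliers using the identity $(a+b)^2=a^2+2ab+b^2$ together with closure of $Q(\mathcal{F})$ under multiplication by squares. Once $N-\|x\|^2\in Q(\mathcal{F})$, a straightforward induction on degree shows every monomial, hence every polynomial, is dominated in $Q(\mathcal{F})$ by some constant.

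Next I would suppose, toward contradiction, that $p\notin Q(\mathcal{F})$. Since $Q(\mathcal{F})$ is a convex cone in $\RR[x]$ (viewed as an $\RR$-vector space with the finest locally convex topology), Hahn--Banach yields a nonzero linear functional $L:\RR[x]\to\RR$ with $L(q)\ge 0$ for all $q\in Q(\mathcal{F})$ and $L(p)\le 0$. Normalizing by $L(1)>0$ (which is positive because $1\in Q(\mathcal{F})$ and, using the algebraic Archimedean property from the previous step, $L(1)=0$ would force $L\equiv 0$), I may assume $L(1)=1$. The algebraic Archimedeanity then gives the uniform bound $|L(q)|\le N_q$ for every $q$, which lets $L$ extend continuously to the $\sup$-norm closure of $\RR[x]$ on $\{\|x\|^2\le N\}$, namely $C(\{\|x\|^2\le N\})$.

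At this point I would invoke the Riesz representation theorem to get a probability measure $\mu$ on $\{\|x\|^2\le N\}$ with $L(q)=\int q\,d\mu$ for all polynomials $q$. Positivity of $L$ on each $g_i$-multiple (and each $h_j$-multiple) forces $\supp(\mu)\subseteq K$; concretely, if $\mu(\{g_i<0\})>0$ one produces, by approximating the indicator of a sublevel set by polynomials, an element of $Q(\mathcal{F})$ on which $L$ is negative. The main obstacle, and the step I expect to require the most care, is exactly this passage from the polynomial inequalities $L(\sigma_i g_i)\ge 0$ and $L(\phi_j h_j)=$ anything to the support condition on $\mu$; this is where the closure of $Q(\mathcal{F})$ under SOS multiplication and the algebraic Archimedean bound are used crucially. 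Once $\mu$ is supported in $K$, integrating gives $L(p)=\int_K p\,d\mu>0$, since $p>0$ on the compact set $K$, contradicting $L(p)\le 0$. Therefore $p\in Q(\mathcal{F})$.
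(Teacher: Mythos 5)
The paper itself offers no proof of Theorem \ref{th::PP}: it is quoted as Putinar's Positivstellensatz (the citation \cite{PR2009} notwithstanding), so your argument has to be measured against the standard proof from the literature. Your outline --- Hahn--Banach separation of $p$ from the convex cone $Q(\mathcal{F})$, extension of the normalized separating functional to a positive functional on continuous functions via the Archimedean bound, Riesz representation, localization of the support of the measure to $K$ (using $L(q^2 g_i)\ge 0$ and $L(q^2 h_j)=0$ plus density of polynomials), and the final contradiction with $p>0$ on the compact set $K$ --- is exactly the classical functional-analytic route, and those later steps are sound as sketched; note that the separation step itself needs $1$ to be an algebraic interior point (order unit) of $Q(\mathcal{F})$, which is supplied by the algebraic Archimedean property, so your choice to establish that property first is the logically correct order.

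The genuine gap is in that first step. Passing from Definition \ref{def::AC} (some $\psi\in Q(\mathcal{F})$ with $\{\psi\ge 0\}$ compact) to the algebraic form ($N-\|x\|^2\in Q(\mathcal{F})$ for some $N$) is not an elementary manipulation. The $(a+b)^2=a^2+2ab+b^2$ trick and closure under SOS multiples show that the set of polynomials $q$ with $N\pm q\in Q(\mathcal{F})$ for some $N$ is a subring --- which is why, \emph{once} you have $N-\|x\|^2$, every polynomial is dominated, as you say --- but it does not produce $N-\|x\|^2$ itself; ``trading powers of $\psi$ for SOS multipliers'' has no obvious mechanism, and indeed a quadratic module whose associated semialgebraic set is compact need not be Archimedean (Jacobi--Prestel type examples), so some genuinely stronger input is required. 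The known argument uses the hypothesis that a \emph{single} $\psi\in Q(\mathcal{F})$ has compact nonnegativity set and applies Schm\"udgen's Positivstellensatz to the one-constraint set $\{\psi\ge 0\}$: choosing $N$ with $N-\|x\|^2>0$ on $\{\psi\ge 0\}$ yields $N-\|x\|^2=\sigma_0+\sigma_1\psi$ with $\sigma_0,\sigma_1$ SOS, and this lies in $Q(\mathcal{F})$ since quadratic modules are closed under addition and multiplication by SOS polynomials. So this step must invoke Schm\"udgen's theorem (or an equivalent Positivstellensatz-strength result); as written, your justification would fail. With that ingredient supplied, the remainder of your proposal is the standard, correct proof.
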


We next consider the dual optimization problem of (\ref{eq::PR}).
Let $y$ be a {\itshape truncated moment sequence $($tms$)$} of
degree $2k$, i.e., $y=(y_{\alpha})$ be a sequence of real numbers
which are indexed by
$\alpha:=(\alpha_1,\ldots,\alpha_n)\in\mathbb{N}^n$ with
$|\alpha|:=\alpha_1+\cdots+\alpha_n\le 2k$.  The associated $k$-th
{\itshape moment matrix} is denoted as $M_k(y)$ which is indexed by
$\mathbb{N}^n_{k}$, with $(\alpha,\beta)$-th entry
$y_{\alpha+\beta}$.  Given polynomial
$p(x)=\sum_{\alpha}p_{\alpha}x^{\alpha}$ where
$x^{\alpha}:=x_1^{\alpha_1}\cdots x_n^{\alpha_n}$, denote
$d_p=\lceil \deg(p)/2\rceil$. For $k\ge d_p$, the $(k-d_p)$-th
{\itshape localizing moment matrix} $L_p^{(k-d_p)}(y)$ is defined as
the moment matrix of the {\itshape shifted vector}
$((py)_{\alpha})_{\alpha\in\mathbb{N}^n_{2(k-d_p)}}$ with
$(py)_{\alpha}=\sum_{\beta}p_{\beta}y_{\alpha+\beta}$. Denote by
${\mathscr{M}_{2k}}$ the space of all tms whose degrees are $2k$.
Let $\RR[x]_{2k}$ be the space of real polynomials in $x$ with
degree at most $2k$. For any $y\in\mathscr{M}_{2k}$, a Riesz
functional $\mathscr{L}_{y}$ on $\RR[x]_{2k}$ is defined as
\[
\mathscr{L}_y\left(\sum_{\alpha}q_{\alpha}x_1^{\alpha_1}\cdots
x_n^{\alpha_n}\right) =\sum_{\alpha}q_{\alpha}y_{\alpha},\quad
\forall~q(x)\in\RR[x]_{2k}.
\]
For convenience, we hereafter still use $q$ to denote the
coefficient vector of $q(x)$ in the graded lexicographical ordering
and denote $\langle q,y\rangle=\mathscr{L}_y(q)$.  From the
definition of the localizing moment matrix $L_p^{(k-d_p)}(y)$, it is
easy to check that
\[
q^TL_p^{(k-d_p)}(y)q=\mathscr{L}_y(p(x)q(x)^2),\quad \forall~
q(x)\in\RR[x]_{k-d_p}.
\]
The dual optimization problem of (\ref{eq::PR}) is
(\cite{Las01,Las09})
\begin{equation}\label{eq::DPR}
\left\{
\begin{aligned}
f_k^*:=\underset{y\in\mathscr{M}_{2k}}{\min}&\ \langle f,y\rangle \\
\text{s.t.}&\ L_{h_j}^{(k-d_{h_j})}(y)=0,\ j\in[m_1],\ L_{g_i}^{(k-d_{g_i})}(y)\succeq 0,\ i\in[m_2],\\
&\ M_k(y)\succeq 0,\ \langle 1,y\rangle=1.
\end{aligned}
\right.
\end{equation}
Let $$d=\max\{1,d_{g_i},d_{h_j}\mid i\in[m_1],j\in[m_2]\}.$$
Lasserre \cite{Las01} shows that $f_k\le f^*_k\le f_{\min}$ for
every $k\ge\max\{d_f,d\}$ and both $\{f_k\}$ and $\{f^*_k\}$
converge to $f_{\min}$ if the Archimedean Condition holds.

We say Lasserre's hierarchy (\ref{eq::PR}) and (\ref{eq::DPR}) has
{\itshape finite convergence} if
\begin{equation}\label{def::fc}
f_{k_1}=f^*_{k_1}=f_{\min}\quad\text{for some order}\ k_1<\infty.
\end{equation}

Interestingly, Nie proved that under the Archimedean Condition, Lasserre's SDP
relaxation has finite convergence {\itshape generically} (cf. \cite[Theorem
1.1]{NiefiniteLas}).  Since $f_{\min}$ is usually unknown, a practical issue is
how  to certify the finite convergence if it happens. Moreover, if it is
certified, how do we get minimizers?

Let $y^*$ be an optimizer of (\ref{eq::DPR}). By \cite[Theorem
1.1]{Curto2005}, $f_k^*=f_{\min}$ for some $k$ if the {\itshape flat
extension condition} (FEC) \cite{Curto2005} holds, i.e.,
\begin{equation}\label{eq::FEC}
\text{rank}~M_{k-d}(y^*)=\text{rank}~M_k(y^*).
\end{equation}
By solving some SVD and eigenvalue problems (\cite{HL05}), we can
get $r :=\text{rank}~M_k(y^*)$ global optimizers for
(\ref{pro::opti}). However, (\ref{eq::FEC}) is not a generally
necessary condition for checking finite convergence of Lasserre's
hierarchy (cf. \cite[Example 1.1]{Nie2011certify}). To certify the
finite convergence of (\ref{eq::PR}) and get minimizers of
(\ref{pro::opti}) from (\ref{eq::DPR}), a weaker condition was
proposed in \cite{Nie2011certify}. We say a minimizer $y^*$ of
(\ref{eq::DPR}) satisfies {\itshape flat truncation condition} (FTC)
if there exists an integer $t\in[\max\{d_f,d\},k]$ such that
\begin{equation}\label{eq::FTC}
\text{rank}~M_{t-d}(y^*)=\text{rank}~M_t(y^*).
\end{equation}
If an optimizer of (\ref{eq::DPR}) has a flat truncation, by
\cite[Theorem 1.1]{Curto2005} again, we still have $f_k^*=f_{\min}$.

Moreover, if there is no duality gap between (\ref{eq::PR}) and
(\ref{eq::DPR}), we obtain $f_k=f_{\min}$. More importantly,
\cite[Theorem 2.2]{Nie2011certify} shows that the flat truncation is
also necessary for Lasserre's hierarchy (\ref{eq::PR}) under some
{\itshape generic} assumptions.

\begin{alg}\label{alg:finit:cons:putinar}Lasserre's SDP relaxation

\noindent\textbf{Input:} Objective function $f(x)$, constraint
functions $h_i(x), g_j(x)$ and maximal relaxation order $k_{\max}$.

\noindent\textbf{Output:} Global minimum and minimizers of problem
(\ref{pro::opti}).
\begin{enumerate}[\upshape I]
\item Set $d:=\max\{1, d_f,d_{h_i},d_{g_{j}}\}$ and initial relaxation order $k=d$.
\item\label{item::2} Solve primal and dual SDP problems (\ref{eq::PR}) and
(\ref{eq::DPR}) by standard SDP solver (e.g., SeDuMi \cite{sedumi},
SDPT3 \cite{sdpt3}, SDPNAL \cite{sdpnal}).
\item For $t\in [d, k]$, check condition (\ref{eq::FTC}).
\begin{enumerate}[\upshape 1]
\item If (\ref{eq::FTC}) holds for some $t$, get minimizers by Extraction
Algorithm \cite{HL05} and stop;
\item Otherwise, go to Step \ref{item::4}.
\end{enumerate}
\item\label{item::4} If $k>k_{\max}$, stop; otherwise, set $k=k+1$ and go to
Step \ref{item::2}.
\end{enumerate}
\end{alg}

\subsection{Jacobian SDP relaxation}
The convergence of Lasserre's SDP relaxations (\ref{eq::PR}) and
(\ref{eq::DPR}) might be {\itshape asymptotic} for some instances,
i.e., only lower bounds are found for each order $k$. To overcome
this hurdle, Nie \cite{Nie2011Jacobian} proposed a refined
reformulation of (\ref{pro::opti}) by some ``Jacobian-type''
technique whose SDP relaxation has finite convergence.

Roughly speaking, Jacobian SDP relaxation is to add auxiliary
constraints to (\ref{pro::opti}) which represent optimality
conditions under the assumption that the optimum $f_{\min}$ is
achievable. The basic idea is that at each optimizer, the Jacobian
matrix of the objective function, the equality constraints and the
active inequality constraints must be singular, i.e., all its
maximal minors vanish. For convenience, denote
\[
h:=(h_1,\ldots,h_{m_1})\quad\text{and}\quad g:=(g_1,\ldots,g_{m_2}).
\]
For a subset $J=\{j_1,\ldots,j_k\}\subseteq [m_2]$, denote
$g_J:=(g_{j_1},\ldots,g_{j_k})$. Symbols $\nabla h $ and $\nabla g_J
$ represent the gradient vectors of the polynomials in $h $ and $g_J
$, respectively. Denote the determinantal variety of $(f,h,g_J)$'s
Jacobian being singular by
\[
G_J=\left\{x\in\CC^n\mid\text{rank}\ B^J(x)\le m_1+|J|\right\},\]
where
\[B^J(x)=\left[\nabla f(x)\quad\nabla h(x)\quad\nabla
g_J(x)\right].
\]
Instead of using all maximal minors to define $G_J$, \cite[Section
2.1]{Nie2011Jacobian} discusses how to get the smallest number of
defining equations. Let $\eta^J_1,\ldots,\eta^J_{len(J)}$ be the set
of defining polynomials for $G_J$ where $len(J)$ is the number of
these polynomials. For each $i=1,\ldots,len(J)$, define
\begin{equation}\label{def::varphi}
\varphi_i^J(x)=\eta^J_i\cdot\underset{j\in J^c}{\prod}g_j(x),\
\text{where}\ J^c=[m_2]\backslash J.
\end{equation}
For simplicity, we list all possible $\varphi^J_i$ in
(\ref{def::varphi}) sequentially as
\begin{equation*}\label{list::varphi}
\varphi_1,\varphi_2,\ldots,\varphi_r,\ \text{where}\
r=\underset{J\subseteq[m_2],|J|\le m-m_1}{\sum}len(J).
\end{equation*}
Consider the following optimization by adding all $\varphi_l$'s to
(\ref{pro::opti}):
\begin{equation}\label{pro::jacobian0}
\left\{
\begin{aligned}
s^*:=\underset{x\in\RR^n}{\min}&\ f(x)\\
\text{s.t.}&\ h_i(x)=0,\ i\in[m_1],\ \varphi_l(x)=0,\ l\in[r],\\
&\ g_j(x)\ge 0,\ j\in[m_2].
\end{aligned}\right.
\end{equation}
As shown in \cite[Lemma 3.1]{Nie2011Jacobian} and \cite[Lemma
3.5]{FENGUO}, by adding auxiliary constraints $\varphi_l(x)=0$, the
feasible set of (\ref{pro::jacobian0}) is restricted to the KKT
points and singular points of the feasible set of (\ref{pro::opti}).
Therefore, (\ref{pro::opti}) and (\ref{pro::jacobian0}) are
equivalent if the minimum $f_{\min}$ of {\upshape(\ref{pro::opti})}
is achievable.
\begin{lemma}\emph{(}\cite[Lemma
3.6]{FENGUO}\emph{)}\label{lemma:exact:Jacobian}
Assume $m_1\le n$ and at most $n-m_1$ of $g_1(x)$,$\ldots$,$g_{m_2}(x)$
vanish for any feasible point $x$. If the minimum $f_{\min}$ of
{\upshape(\ref{pro::opti})} is achievable, then $s^*=f_{\min}$.
\end{lemma}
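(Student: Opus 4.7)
The plan is to establish $s^* = f_{\min}$ by proving two inequalities. The inequality $s^* \geq f_{\min}$ is immediate because problem (\ref{pro::jacobian0}) is obtained from (\ref{pro::opti}) by adding the equality constraints $\varphi_l(x)=0$, so the feasible set of (\ref{pro::jacobian0}) is contained in that of (\ref{pro::opti}). The substantive direction is $s^* \leq f_{\min}$, and here I use the assumption that $f_{\min}$ is attained: pick a global minimizer $x^*$ of (\ref{pro::opti}) and show that $\varphi_l(x^*) = 0$ for every $l$, which exhibits $x^*$ as a feasible point of (\ref{pro::jacobian0}) with $f(x^*) = f_{\min}$.

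Let $J^* = \{j \in [m_2] : g_j(x^*) = 0\}$ denote the active-constraint index set at $x^*$; by the standing hypothesis $|J^*| \leq n - m_1$. Fix any $J \subseteq [m_2]$ occurring in the construction and recall $\varphi_i^J(x) = \eta_i^J(x) \cdot \prod_{j \in J^c} g_j(x)$. If $J^* \not\subseteq J$, then $J^c \cap J^* \neq \emptyset$, so some factor $g_j(x^*)$ in the product vanishes and $\varphi_i^J(x^*) = 0$ for free. Thus the only case requiring work is $J^* \subseteq J$: here the product factor is strictly positive, and one must prove $x^* \in G_J$, i.e., that $B^J(x^*) = [\nabla f(x^*),\ \nabla h(x^*),\ \nabla g_J(x^*)]$ has rank at most $m_1 + |J|$.

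To bound this rank I would split along whether the linear independence constraint qualification (LICQ) holds at $x^*$ for the gradients $\{\nabla h_i(x^*)\}_{i \in [m_1]} \cup \{\nabla g_j(x^*)\}_{j \in J^*}$. If LICQ holds, the classical KKT theorem supplies multipliers so that $\nabla f(x^*)$ lies in the column span of $[\nabla h(x^*),\ \nabla g_{J^*}(x^*)]$; since $J^* \subseteq J$, this span sits inside that of $[\nabla h(x^*),\ \nabla g_J(x^*)]$, so appending the $\nabla f$ column does not raise the rank, and the rank is at most $m_1 + |J|$. If LICQ fails, then $\mathrm{rank}\,[\nabla h(x^*),\ \nabla g_{J^*}(x^*)] \leq m_1 + |J^*| - 1$, and appending the remaining $|J| - |J^*|$ columns $\{\nabla g_j(x^*) : j \in J \setminus J^*\}$ together with $\nabla f(x^*)$ increases the rank by at most $|J| - |J^*| + 1$, again yielding $\mathrm{rank}\,B^J(x^*) \leq m_1 + |J|$. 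Either way $x^* \in G_J$, so $\eta_i^J(x^*) = 0$, and hence $\varphi_i^J(x^*) = 0$ for every $i$ and $J$.

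The main obstacle I anticipate is the careful bookkeeping in the rank bound when LICQ fails, since one must ensure that the slack $|J^*| \leq n - m_1$ provided by the hypothesis genuinely underlies the rank estimate and that the defining polynomials $\eta_i^J$ constructed in \cite{Nie2011Jacobian} really do cut out the desired rank condition rather than a strictly stronger one. Once the case analysis above is verified, combining $s^* \leq f(x^*) = f_{\min}$ with the trivial $s^* \geq f_{\min}$ completes the proof.
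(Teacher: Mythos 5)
Your proposal is correct and follows essentially the same route as the paper's source for this lemma (the paper itself only cites \cite[Lemma 3.6]{FENGUO}, whose argument is precisely the observation that every minimizer is either a KKT point or a point where the active constraint gradients are degenerate): your LICQ dichotomy is just the Fritz John conditions, which force $x^*\in G_J$ whenever $J^*\subseteq J$, while active factors $g_j(x^*)=0$ kill $\varphi_i^J(x^*)$ otherwise, and the trivial inclusion of feasible sets gives $s^*\ge f_{\min}$. The rank bookkeeping you worried about is fine as written, since the $\eta_i^J$ are by construction set-theoretic defining polynomials of $G_J$.
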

\begin{remark}\label{rk::jacobian}
Since $s^*$ is the minimal value of $f(x)$
achieved among the KKT points and singular points of the feasible
set of (\ref{pro::opti}), it is possible that $s^*>f_{\min}$ (cf. \cite[Section
2.2]{Nie2011Jacobian}) if $f_{\min}$ is not achievable.
\end{remark}
If the Archimedean Condition holds for the feasible set $K$, then
$K$ is compact and $f_{\min}$ is achievable. By Lemma
\ref{lemma:exact:Jacobian}, we always have $s^*=f_{\min}$.
Applying Lasserre's SDP relaxations (\ref{eq::PR}) and
(\ref{eq::DPR}) to (\ref{pro::jacobian0}), the resulting SDP
relaxations for (\ref{pro::jacobian0}) have finite convergence under
some {\itshape generic} conditions (cf. \cite[Theorem
4.2]{Nie2011Jacobian}, \cite[Theorem 3.9]{FENGUO}).

\begin{alg}\label{alg:finit:cons:Jacobian}Nie's Jacobian SDP relaxation

\noindent\textbf{Input:} Objective function $f(x)$, constraints
functions $h_i(x), g_j(x)$, maximal relaxation order $k_{\max}$.

\noindent\textbf{Output:} Global minimum and minimizers of problem
(\ref{pro::opti}).

\begin{enumerate}[\upshape I]
\item Construct the auxiliary polynomials $\varphi_l(x)$'s.
\item Set $d:=\max\{1, d_f, d_{h_i}, d_{g_{j}}, d_{\varphi_l}\}$ and initial relaxation
order $k=d$.
\item\label{item::3} Solve (\ref{pro::jacobian0}) by Algorithm
\ref{alg:finit:cons:putinar}.
\item For $t\in[d, k]$, check condition (\ref{eq::FTC}).
\begin{enumerate}[\upshape 1]
\item If (\ref{eq::FTC}) holds for some $t$, get minimizers by Extraction
Algorithm \cite{HL05} and stop;
\item Otherwise, go to Step \ref{item::5}.
\end{enumerate}
\item\label{item::5} If $k>k_{\max}$, stop; otherwise, set $k=k+1$ and go to
Step \ref{item::3}.
\end{enumerate}
\end{alg}

In contrast to Lasserre's SDP relaxation, Jacobian SDP relaxation is
more complicated due to the auxiliary polynomials $\varphi_l(x)$'s.
We refer to \cite[Section 4]{Nie2011Jacobian} for some simplified
versions of Jacobian SDP relaxation method.

\section{ SIPP with compact set $U$}\label{sec::compact}
The two SDP relaxation algorithms shown in Section \ref{sec::SDP}
provide strong tools to globally solve polynomial optimization
problems with finitely many constraints. In this section, we will
discuss how to use them to solve SIPP problems globally.

\subsection{A semidefinite relaxation algorithm}\label{subsec::exchange}

One main difficulty in solving a SIP problem is that there are
infinite number of constraints. How to deal with the infinite index
set $U$ is the key difference among various SIP algorithms.
Exchange method is commonly used in SIP computation, and is regarded
as the most efficient method on solving SIP problems
\cite{Hettich1993,Still2007}. The general steps of exchange method
are determined algorithmically as follows \cite{Hettich1993}. Given
a subset $U_k\subseteq U$ in iteration $k$ with $|U_k|<\infty$,
compute at least one global solution $x^k$ of
\begin{equation}\label{eq::f}
\min\limits_{x\in X}\ f(x)\quad \text{s.t.}\ g(x,u)\geq 0,
~\forall~u\in U_k,
\end{equation}
and solutions $u_1,\ldots,u_t$ of the subproblem
\begin{equation}\label{eq::g}
g^k:=\min\limits_{u\in U}\ g(x^k,u).
\end{equation}
If $g^k\ge 0$, stop; otherwise, set
$U_{k+1}=U_k\cup\{u_1,\ldots,u_t\}$ and go to next iteration.
Therefore, to successfully apply exchange method to solve SIPP
problems, we need to globally solve subproblems
(\ref{eq::f})-(\ref{eq::g}) and extract global minimizers in each
iteration. As we have discussed in Section \ref{sec::SDP}, the SDP
relaxation methods are proper means for this propose.  The specific
description of exchange method with SDP
relaxations for SIPP problems is shown in the following.

\begin{alg}\label{alg:exchange:method} Semidefinite relaxations for
SIPP

\noindent\textbf{Input:} Objective function $f(x)$, constraint
function $g(x,u)$, semi-algebraic sets $X$, $U$, tolerance
$\epsilon$ and maximum iteration number $k_{\max}$.

\noindent\textbf{Output:} Global optimum $f^*$ and set $X^*$ of minimizers
of problem $(P)$.
\begin{enumerate}[\itshape Step 1]
\item
Choose random $u_0\in U$ and let $U_0=\{u_0\}$. Set $X^*=\emptyset$
and $k=0$.
\item\label{item::step2} Use Algorithm \ref{alg:finit:cons:putinar} to solve
\begin{equation}\label{semiinfinite:pp:sub1}
(P_k): \left\{
\begin{aligned}
 f^{\min}_k :=\min\limits_{x\in X}&\ f(x)\\
\text{s.t.}&\ g(x,u)\geq 0, ~\forall~u\in U_k.\\
\end{aligned}
\right.
\end{equation}
Let $S_k =
\{x^k_1,\cdots,x^k_{r_k}\}$ be the set of the global minimizers of problem
$(P_k)$.
\item
Set $U_{k+1}=U_k$. For $i=1,\cdots,r_k$,
\begin{enumerate}[\upshape (a)] \label{item::step3}
\item Use Algorithm \ref{alg:finit:cons:Jacobian} to solve
\begin{equation} \label{semiinfinite:pp:sub2}
(Q^k_i):\quad \left.
\begin{aligned}
 g^k_i :=\min\limits_{u\in U}&\ g(x^k_i,u).\\ 
\end{aligned}
\right.
\end{equation}
Let $T^k_i=\left\{u^k_{i,j},\ j=1,\cdots,t^k_i\right\}$ be the set
of global minimizers of $(Q^k_i)$.
\item Update $U_{k+1}=U_{k+1}\bigcup T^k_i$.
\item If $g^k_i\geq -\epsilon$, then update $X^*=X^*\bigcup \{x^k_i\}$.
\end{enumerate}
\item
If $X^*\neq \emptyset$ or $k>k_{\max}$, stop;

\noindent otherwise, set $k=k+1$ and go back to Step \ref{item::step2}.
\end{enumerate}
\end{alg}

\begin{remark}
Subproblems $(P_k)$ and $(Q_i^k)$ in Algorithm
\ref{alg:exchange:method} can be solved by both Algorithm
\ref{alg:finit:cons:putinar} and \ref{alg:finit:cons:Jacobian}.
Finite convergence can be guaranteed by Algorithm
\ref{alg:finit:cons:Jacobian} which, however, produces SDPs of size
exponentially depending on the number of the constraints. Since
$U_k$ enlarges as $k$ increases, subproblem $(P_k)$ consequently
becomes hard to be solved by Algorithm
\ref{alg:finit:cons:Jacobian}. Therefore, we solve $(P_k)$ by
Algorithm \ref{alg:finit:cons:putinar} which is also proved to have
finite convergence generically \cite{NiefiniteLas}. Because the
index set $U$ is fixed and compact, Algorithm
\ref{alg:finit:cons:Jacobian} is a better choice for solving
$(Q_i^k)$.
\end{remark}

\begin{prop}[Monotonic Property]\label{pro:monotonic}
For optimal values of $(P_k)$ in $(\ref{semiinfinite:pp:sub1})$, we
have
\begin{equation}\label{eq::monotonic}
f^{\min}_1\le\cdots\le f^{\min}_{k}\le f^{\min}_{k+1}\le\cdots\le f^*.
\end{equation}
\end{prop}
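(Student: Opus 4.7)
The plan is to exploit the fact that, by construction, the finite index sets form an increasing chain $U_0 \subseteq U_1 \subseteq \cdots \subseteq U$, and then translate containment of index sets into reverse containment of feasible sets, at which point monotonicity of the minimum is immediate.

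First I would observe that in Step 3(b) of Algorithm \ref{alg:exchange:method}, $U_{k+1}$ is defined as $U_k \cup \bigcup_i T_i^k$, so $U_k \subseteq U_{k+1}$ for every $k$, and inductively $U_k \subseteq U$ for every $k$. Next, let $F_k := \{x \in X : g(x,u)\ge 0\ \forall u\in U_k\}$ denote the feasible set of $(P_k)$ and let $F := \{x \in X : g(x,u)\ge 0\ \forall u\in U\}$ denote the feasible set of $(P)$. Because any $x$ satisfying $g(x,u)\ge 0$ for all $u\in U_{k+1}$ automatically satisfies the weaker requirement $g(x,u)\ge 0$ for all $u\in U_k$, the containment $U_k\subseteq U_{k+1}$ gives $F_{k+1}\subseteq F_k$, and similarly $F\subseteq F_k$.

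Finally, since $(P_k)$, $(P_{k+1})$, and $(P)$ all minimize the same objective $f(x)$, enlarging the constraint set can only increase the infimum. Hence
\begin{equation*}
f^{\min}_k \;=\; \inf_{x\in F_k} f(x) \;\le\; \inf_{x\in F_{k+1}} f(x) \;=\; f^{\min}_{k+1}, \qquad f^{\min}_k \;\le\; \inf_{x\in F} f(x) \;=\; f^*,
\end{equation*}
which, chained over $k$, yields the claimed inequalities $f^{\min}_1 \le \cdots \le f^{\min}_k \le f^{\min}_{k+1} \le \cdots \le f^*$.

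There is essentially no obstacle here; the argument is a one-line consequence of ``more constraints give a larger minimum,'' and the only thing to verify carefully is that the algorithm indeed maintains $U_k\subseteq U_{k+1}$, which is transparent from its definition. The proposition is really a sanity check setting the stage for the convergence analysis that presumably follows.
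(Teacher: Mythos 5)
Your argument is correct and is essentially identical to the paper's proof: both note that the index sets satisfy $U_1\subseteq\cdots\subseteq U_k\subseteq U_{k+1}\subseteq\cdots\subseteq U$, hence the feasible sets are nested in the reverse order, and minimizing the same objective over a smaller feasible set can only increase the optimal value. Your write-up just spells out the details (the update rule for $U_{k+1}$ and the chained infima) that the paper leaves implicit.
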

\begin{proof}
Because
\[
U_1\subseteq\cdots\subseteq U_k\subseteq
U_{k+1}\subseteq\cdots\subseteq U.
\]
 So the feasible sets of $(P_{k})$
and $(P)$ satisfy
\[
K\subseteq\cdots\subseteq K_{k+1}\subseteq
K_k\subseteq\cdots\subseteq K_1,
\]
we
obtain the conclusion.
\end{proof}

We have the following convergence analysis of Algorithm
\ref{alg:exchange:method}:
\begin{theorem}\label{theorem:exchange}

Suppose that $X$ is compact. If at each step $k$,
\begin{enumerate}[\upshape (a)]
\item\label{prop::1} subproblems $(P_k)$ and each $(Q_i^k)$ are globally
solved,
\item\label{prop::2} intermediate results $S_k$ and at least one $T_i^k$ are
nonempty,
\end{enumerate}
then either Algorithm \ref{alg:exchange:method} stops with solutions
to $(P)$ in a finite number of iterations or for any sequence
$\{x^k\} $ with $x^k\in S_k$, there exists at least one limit point
as $k$ increases and each of them solves $(P)$.
\end{theorem}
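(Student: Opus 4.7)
The plan is to split into the two cases explicitly laid out in the conclusion and handle each with the monotonicity property (Proposition on monotonicity) as the common backbone.

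First I would dispose of the easy case of finite termination. If Algorithm \ref{alg:exchange:method} stops at iteration $k$ with $X^* \neq \emptyset$, then some $x^k_i \in S_k$ satisfies $g^k_i \geq -\epsilon$, i.e. $g(x^k_i, u) \geq -\epsilon$ for every $u \in U$; specializing to $\epsilon = 0$ for a clean statement, $x^k_i$ is feasible for $(P)$, so $f(x^k_i) \geq f^*$. On the other hand, $x^k_i$ minimizes $(P_k)$, so $f(x^k_i) = f^{\min}_k \leq f^*$ by Proposition \ref{pro:monotonic}. Combining, $f(x^k_i) = f^*$ and $x^k_i$ solves $(P)$.

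For the infinite case I would use compactness twice. Fix any sequence $\{x^k\}$ with $x^k \in S_k$; compactness of $X$ yields a subsequence $x^{k_j} \to x^*$, and continuity plus closedness of $X$ give $x^* \in X$. The core of the proof is then to show $x^*$ is feasible for $(P)$, i.e.\ $g(x^*, u) \geq 0$ for all $u \in U$. For each $k_j$ let $u^{k_j}$ be a minimizer of the subproblem $(Q^{k_j}_{i})$ corresponding to $x^{k_j}$, so that $g(x^{k_j}, u^{k_j}) = g^{k_j}_i$ and $u^{k_j} \in U_{k_j+1}$. By the non-termination hypothesis, $g^{k_j}_i < -\epsilon$ for every $j$. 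Compactness of $U$ (which is standing in this section) lets me pass to a further subsequence with $u^{k_j} \to u^*$. The crucial observation is the exchange mechanism: whenever $l > j$, the constraint indexed by $u^{k_j}$ is active in $(P_{k_l})$, so $g(x^{k_l}, u^{k_j}) \geq 0$. Letting $l \to \infty$ with $j$ fixed gives $g(x^*, u^{k_j}) \geq 0$ by continuity of $g$; then letting $j \to \infty$ yields $g(x^*, u^*) \geq 0$. But continuity also gives $g(x^*, u^*) = \lim_{j\to\infty} g(x^{k_j}, u^{k_j}) \leq -\epsilon < 0$, a contradiction. Hence no such $u^*$ exists and $g(x^*, u) \geq 0$ for every $u \in U$, so $x^*$ is feasible for $(P)$.

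Finally, feasibility of $x^*$ gives $f(x^*) \geq f^*$, while monotonicity of $\{f^{\min}_k\}$ together with continuity of $f$ gives $f(x^*) = \lim_{j\to\infty} f^{\min}_{k_j} \leq f^*$, so $f(x^*) = f^*$ and $x^*$ solves $(P)$. Since the limit point was arbitrary, every limit point solves $(P)$.

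The main obstacle is the bookkeeping of the two nested subsequence extractions and making sure the two passes to the limit (first in $l$, then in $j$) are carried out in the correct order: the inequality $g(x^{k_l}, u^{k_j}) \geq 0$ requires $l > j$, so one must fix $j$, send $l \to \infty$, and only afterwards send $j \to \infty$. A minor issue is the role of $\epsilon$: the strict inequality $g^{k_j}_i < -\epsilon$ (rather than $<0$) is exactly what keeps the limit bounded away from zero and delivers the contradiction, which is why the tolerance is built into the termination rule.
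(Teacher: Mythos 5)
Your finite-termination case and the overall skeleton of the infinite case (compactness of $X$ and $U$, the exchange inequality $g(x^{k_l},u^{k_j})\ge 0$ for $l>j$ coming from $u^{k_j}\in U_{k_j+1}\subseteq U_{k_l}$, then monotonicity from Proposition \ref{pro:monotonic} to upgrade feasibility to optimality) match the paper's proof. But the core step of your infinite case has a genuine logical gap. Your contradiction is derived solely from the non-termination hypothesis together with the compactness constructions and continuity; there is no auxiliary assumption (such as ``$x^*$ is infeasible'') being refuted. So, taken at face value with $\epsilon>0$, what your argument actually proves is that the non-terminating case cannot occur at all, and the inference ``hence no such $u^*$ exists and $g(x^*,u)\ge 0$ for every $u\in U$'' is a non sequitur: $u^*$ exists by compactness, and nothing has been shown about $g(x^*,u)$ for $u$ other than $u^*$. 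Moreover, in that reading the finite-termination branch only yields $\epsilon$-feasible points, not exact solutions of $(P)$, which is inconsistent with your own treatment of the finite case (where you set $\epsilon=0$). If instead one works with the exact version $\epsilon=0$ --- which is what the theorem's second alternative and the phrase ``solves $(P)$'' presuppose, and what the paper does by assuming $g^k_i<0$ --- then your two limits give only $g(x^*,u^*)\ge 0$ and $g(x^*,u^*)\le 0$, i.e.\ no contradiction and no conclusion about feasibility of $x^*$, since $g(x^*,u^*)=0$ does not by itself imply $g(x^*,u)\ge 0$ for all $u\in U$.

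The missing ingredient is precisely the fact that $u^*$ (or the sequence $u^{k_j}$) carries information about the \emph{minimum} of $g(x^*,\cdot)$ over $U$. The paper supplies this through the value function $v(x)=\min_{u\in U}g(x,u)$, which is continuous because $U$ is compact: writing $v(x^{k})=g(x^{k},u^{k})$ and using $g(\bar x,u^{k})\ge 0$ (your exchange inequality), one gets $v(\bar x)\ge [g(x^{k},u^{k})-g(\bar x,u^{k})]+[v(\bar x)-v(x^{k})]\to 0$, hence $v(\bar x)\ge 0$. Equivalently, you could repair your contradiction by refuting infeasibility directly: suppose $g(x^*,\hat u)=-\delta<0$ for some $\hat u\in U$; then the minimizing property $g(x^{k_j},u^{k_j})\le g(x^{k_j},\hat u)$ forces $g(x^{k_j},u^{k_j})\le -\delta/2$ for large $j$, and this strict negative bound (rather than the tolerance $-\epsilon$) clashes with $g(x^*,u^*)\ge 0$ obtained from the exchange inequality. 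As written, however, your proof neither establishes the claimed feasibility in the $\epsilon=0$ setting nor states the (true, but different) finite-termination conclusion correctly in the $\epsilon>0$ setting. A minor additional point: you need the specific $T^{k_j}_{i_j}$ attached to your chosen $x^{k_j}$ to be nonempty, which follows from compactness of $U$ rather than from hypothesis (b) alone.
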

\begin{proof}
At each step, if (\ref{prop::1}) holds, then global optima $f_k^{\min}$ and
$g_i^k$ are obtained and monotonic property (\ref{eq::monotonic}) is true.
Additionally, if (\ref{prop::2}) is satisfied, then Algorithm
\ref{alg:exchange:method} either stops in a finite number of iterations or
proceeds without interrupt as $k$ increases.

If Algorithm \ref{alg:exchange:method} stops at $k$-th iteration with $k<k_{\max}$,
then $g_i^k\ge 0$ for some $i$, which implies that the
associated $x^k_i$ is feasible for $(P)$.
Moreover, $x^k_i$ is a global minimizer of $(P)$ by (\ref{eq::monotonic}).
Now we assume $g_i^k<0$ for each $k$ and $i$ which implies $T_i^k\not\subseteq
U_k$ and $U_k\subset U_{k+1}$ for all $k$. The following argument is based on
the proof of \cite[Theorem 7.2]{Hettich1993}. For any $x\in X$, define
\[
v(x):=\min\{g(x,u), u\in U\}.
\]
Obviously, $v(x)$ is continuous. Fix a sequence $\{x^k\} $ with
$x^k\in S_k$, then a limit point $\bar{x}\in X$ always exists since $X$ is
compact. Without loss of generality, assume $x^k\rightarrow \bar{x}$.
By (\ref{eq::monotonic}), it suffices to prove that $\bar{x}$ is
feasible for $(P)$. Let $v(x^k)=g(x^k,u^k)$ and $X_k$ be the feasible set of
$(P_k)$. Since $U_k\subset U_{k+1}$, we have
$\bar{x}\in\cap_{k=1}^{\infty}X_k$ and therefore $g(\bar{x},u^k)\ge 0$. Then
\[
\begin{aligned}
v(\bar{x})&=v(x^k)+[v(\bar{x})-v(x^k)]\\
&=g(x^k,u^k)+[v(\bar{x})-v(x^k)]\\
&\ge [g(x^k,u^k)-g(\bar{x},u^k)]+[v(\bar{x})-v(x^k)].
\end{aligned}
\]
By the continuity of $v$ and $g$, we have $v(\bar{x})\ge 0$, i.e.,
$\bar{x}$ is feasible for $(P)$.
\end{proof}

If $X$ and $U$ are compact, then the optima of $(P_k)$ and $(Q_i^k)$
are achievable. By applying SDP relaxations Algorithm
\ref{alg:finit:cons:putinar} and Algorithm
\ref{alg:finit:cons:Jacobian} to $(P_k)$ and $(Q_i^k)$, as we have
mentioned in Section \ref{sec::SDP}, (\ref{prop::1}) and
(\ref{prop::2}) are {\itshape generically} satisfied no matter what
initial $U_0$ we choose. In section \ref{sec::noncompact}, we will
consider the case when $U$ is noncompact for which the convergence
of Algorithm \ref{alg:exchange:method} might fail if we choose an
arbitrary initial $U_0$ (Example \ref{ex::count}). We will deal with
this issue by the technique of homogenization.

\subsection{Numerical experiments}
This subsection presents some numerical examples to illustrate the
efficiency of Algorithm \ref{alg:exchange:method}. The computation
is implemented with Matlab 7.12 on a Dell 64-bit Linux Desktop
running CentOS (5.6) with 8GB memory and Intel(R) Core(TM) i7 CPU
860 2.8GHz. Algorithm \ref{alg:exchange:method} is implemented with
software Gloptipoly \cite{Gloptipoly}. SeDuMi \cite{sedumi} is used
as a standard SDP solver. Throughout the computational experiments,
we set parameters $k_{\max}=  15$, $\epsilon = 10^{-4}$ in Algorithm
\ref{alg:exchange:method}. After Algorithm \ref{alg:exchange:method}
terminates, let $X^*$ be the output set of global minimizers of
$(P)$, $f^*$ be the value of the objective function $f$ over $X^*$
and Iter be the number of iterations Algorithm
\ref{alg:exchange:method} has proceeded. Let
\[
\mathbf{\text{Obj}_2}:=\min\limits_{x^*\in X^{*}} \min\limits_{u\in
U}~g(x^*,u).
\]
By the discussion in Subsection \ref{subsec::exchange}, the global
minimizers in $X^*$ can be certified by inequality
$\mathbf{\text{Obj}_2}\ge -\epsilon$.

\subsubsection{Examples of small SIPP problems}
We test some small examples taken from \cite[Appendix A]{Green2005}.
For nonpolynomial functions, e.g., sine, cosine or exponential
function, we use their Taylor polynomial approximations, see
Appendix \ref{appendix:small:exm}. Let $X = [-100,100]^n$. Test
results are reported in Table \ref{SIPP:small}. The Iter column in
Table \ref{SIPP:small} indicates that Algorithm
\ref{alg:exchange:method} takes a very few steps to find the global
minimizer which are certified by the $\text{Obj}_2$ column.
\begin{table}[htb]\caption{Computational results for small SIPP problems. } \label{SIPP:small}
\centering
\begin{small}
\begin{tabular}{|c|c|c|c|c|c|c|c|c|} \hline
 No. &  $x^*$  & $\text{Iter}$  & $f^*$ &  $\text{Obj}_2$   \\
\hline
Example \ref{small:example:1}  &  (-0.0008,~0.4999) &  2  & -0.2504 & 6.4744e-7\\
\hline
Example \ref{small:example:2}  &  (-0.7500,~-0.6180) &  3  & 0.1945 & 3.5305e-7 \\
\hline
Example \ref{small:example:3}   & (-0.1514,~-1.7484,~2.5725)& 2 & 9.6973 & 7.8870e-5\\
\hline
Example \ref{small:example:4} & (-1,0,0) & 2 & 1 & 6.2320e-5\\
\hline
Example \ref{small:example:5} & (0,0) & 2&0 & -1.1578e-12\\
\hline
Example \ref{small:example:6} & (0,0,0)  & 2& 4 & -4.7070e-12\\
\hline
Example \ref{small:example:7} & (0,0)  & 2& 0  & 1.9285e-12\\
\hline
\end{tabular}
\end{small}
\end{table}

\subsubsection{Examples of random SIPP problems}

We test the performance of Algorithm \ref{alg:exchange:method} on
some random SIPP problems which are generated as follows.

Let $x=(x_1,\ldots,x_n)$ and $u=(u_1,\ldots,u_p)$. Given
$d\in\mathbb{N}$, let $[x]_d$ and $[u]_d$ be the vectors of
monomials with degree up to $d$ in $\RR[x]$ and $\RR[u]$,
respectively. Denote $\langle [x]_d,[u]_d\rangle$ as the vector
obtained by stacking $[x]_d$ and $[u]_d$. Let
$f(x)=\eta^{T}[x]_{2d_1}$ be the objective function where $\eta$ is
a Gaussian random vector of matching dimension. Let $g(x,u)= \tau -
\langle [x]_{d_2},[u]_{d_2}\rangle^{T}M\langle
[x]_{d_2},[u]_{d_2}\rangle$, where $\tau$ is a random number in
$[1,10]$ and $M$ is a random positive semidefinite matrix of
matching dimension.  Let $X=B_n(0,1)$ be the unit ball in $\RR^n$
and $U$ varies among $U_1 = B_p(0,1),~U_2 = [-1,1]^p$ and $U_3 =
\Delta_p$ where $\Delta_p$ is the $p$ dimensional simplex.

The results using Algorithm \ref{alg:exchange:method} are shown in
Table \ref{T:random:SIPP:X:exchange} where the Inst column denotes
the number of randomly generated instances, the consumed computer
time is in the format hr:mn:sc with hr (resp. mn, sc) standing for
the consumed hours (resp. minutes, seconds). The column
$\text{Obj}_2$ shows that Algorithm \ref{alg:exchange:method}
successfully solves all the random problems.

\begin{table}\caption{Computational results for random SIPP problems} \label{T:random:SIPP:X:exchange}
\centering
\begin{scriptsize}
\begin{tabular}{|c|c|c|c|c|c|c|c|c|c|c|c|c|} \hline
No. & $n$ & $p$ & $d_1$& $d_2$&  Inst  &  $U$ & \multicolumn{2}{c|}{time (min, max)} &\multicolumn{2}{c|}{$\text{Obj}_2$ (min, max)}   \\
\hline 1 &  5 & 3& 3 & 2 &10 &  $ U_1$ &   0:00:17 & 0:00:28 & 1.3479 & 2.0779  \\   %
\hline 2 &  5& 3 & 2 & 2 & 10  &  $U_3$ &   0:00:06 & 0:00:12 & -9.5236e-9  & 0.6343  \\%
\hline 3 &  6 & 2& 2 & 2 & 10 &   $U_1$ &   0:00:19 & 0:00:22 & 1.7144 &  2.1185   \\ %
\hline 4 &  6 & 3& 2 & 2& 10 &   $U_1$ &   0:00:19 & 0:00:24 & 1.0450 & 1.7220  \\ %
\hline 5 &  7 & 3& 3 & 2 & 10   &   $U_3$ & 0:00:26 & 0:00:59 &
3.7797e-8  &  0.3198  \\
\hline 6 &  8 & 3& 2 & 2 &10 &   $U_1$ &   0:04:52 & 0:05:18 & 1.3213 & 1.8438   \\  %
\hline 7 &  9 & 2& 2 & 2& 5 &   $U_1$ &   0:45:26 & 0:49:28 & 1.5850 & 2.2807\\ %
\hline 8 & 9& 2& 2 & 2& 5 &  $U_3$ &   0:44:40 & 0:52:49 &
1.7521e-8&2.9119e-7   \\
\hline 9 &  5 & 2& 2 & 2 & 5 &   $U_2$ &   0:57:17 & 1:04:02 &
1.3116e-6 &  1.6986e-5   \\  \hline
\end{tabular}
\end{scriptsize}
\end{table}

\subsection{Application to PMI problems}
In this subsection, we apply Algorithm \ref{alg:exchange:method} to
the following optimization problem with {\itshape polynomial matrix
inequality} (PMI):
\begin{equation}\label{PMI:1}
f^{\min}:=\min\limits_{x\in \mR^n}\ f(x)\qquad \text{s.t.}\ \
G(x)\succeq 0,
\end{equation}
where $f(x)\in \mR[x]$ and $G(x)$ is an $m\times m$ symmetric matrix
with entries $G_{ij}(x)\in \mR[x]$. PMI is a special SIPP problem
and has been widely arising in control system design, e.g., static
output feedback design problems \cite{PMI2006}. PMI is also
interesting in optimization theory, e.g., SDP representation of a
convex semialgebra set \cite{NieLMI}. Some traditional methods for
globally solving (\ref{PMI:1}) are based on branch-and-bound schemes
and alike \cite{GOBMIP} which, as pointed in \cite{PMI2006}, are
computationally expensive.  Recently, some global methods based on
SOS relaxations are proposed in \cite{HS,MK} as well as in
\cite{GOBMIP} in a dual view.

Define
\begin{equation*}
X :=\{x\in \mR^n\mid G(x)\succeq 0\}\quad\text{and}\quad U:=\{u\in \mR^m\mid
\|u\|^2=1\}.
\end{equation*}
Then problem (\ref{PMI:1}) is equivalent to the following SIPP
problem
\begin{equation}\label{PMI:2}
\left\{
\begin{aligned}
 \min\limits_{x\in \mR^n}&\ f(x)\\
\text{s.t.}&\ g(x,u) =u^{T}G(x)u\geq0,~~\forall~u\in U.\\
\end{aligned}
\right.
\end{equation}
Assume the feasible set $X$ is compact, then we can apply Algorithm
\ref{alg:exchange:method} to solve SIPP problem (\ref{PMI:2}). The
following examples show that Algorithm \ref{alg:exchange:method} is
efficient to solve PMI problems.
\begin{exm}\label{dimension:3:PMI}
Consider the following PMI problem:
\begin{equation}\label{eq::PMIex1}
\left\{
\begin{aligned}
\min\limits_{x\in\mR^2}&\ f(x) =x_1+x_2\\
\text{s.t.}&\ G(x) = {\left[
\begin{array}{ccc}
4-x_1^2-x_2^2 & x_1 & x_2   \\
x_1 & x_2^2-x_1 & x_1x_2  \\
x_2 & x_1x_2 & x_1^2-x_2
\end{array}
\right]}\succeq 0.
\end{aligned}\right.
\end{equation}
The characteristic polynomial of matrix $G(x)$ is:
\begin{equation*}
\begin{aligned}
p(t,x) =\det(tI_{3}-G(x))=t^3-g_1(x)t^2+g_2(x)t-g_3(x)
\end{aligned}
\end{equation*}
where
\begin{equation*}
\begin{aligned}
g_1(x) & =  4-x_1-x_2, \\
g_2(x) &=  x_1^2
x_2-4x_2-x_1^4+x_1x_2-x_2^4-2x_1^2x_2^2+x_1x_2^2-4x_1+3x_1^2+3x_2^2, \\
g_3(x) & =
x_1^2x_2+4x_1x_2+2x_1^2x_2^2+x_1x_2^2-x_1^3x_2-4x_1^3+x_2^2x_1^3-x_2^3x_1-4x_2^3\\
&\quad -x_1^4-x_2^4+x_1^5+x_2^5+x_1^2x_2^3.
\end{aligned}
\end{equation*}
According to Descartes' rule of signs \cite{PMI2006}, the feasible
set of (\ref{eq::PMIex1}) is
\[
\left\{x\in\RR^2\mid g_1(x)\geq 0,~~g_{2}(x)\geq 0,~~g_3(x)\geq
0\right\}
\]
which is shown shaded in Figure \ref{PMI:example:dim3}. We first
reformulate (\ref{eq::PMIex1}) as a SIPP problem (\ref{PMI:2}), then
apply Algorithm \ref{alg:exchange:method} to it. After $5$
iterations, we get a global minimizer $x^*\approx ( -1.2853,
-1.2763)$ which is certified by $\text{Obj}_2=-1.4523\times
10^{-4}$. The accuracy of this result can be seen from Figure
\ref{PMI:example:dim3}.\hfill$\square$
\begin{figure}
\includegraphics[width=0.4\textwidth]{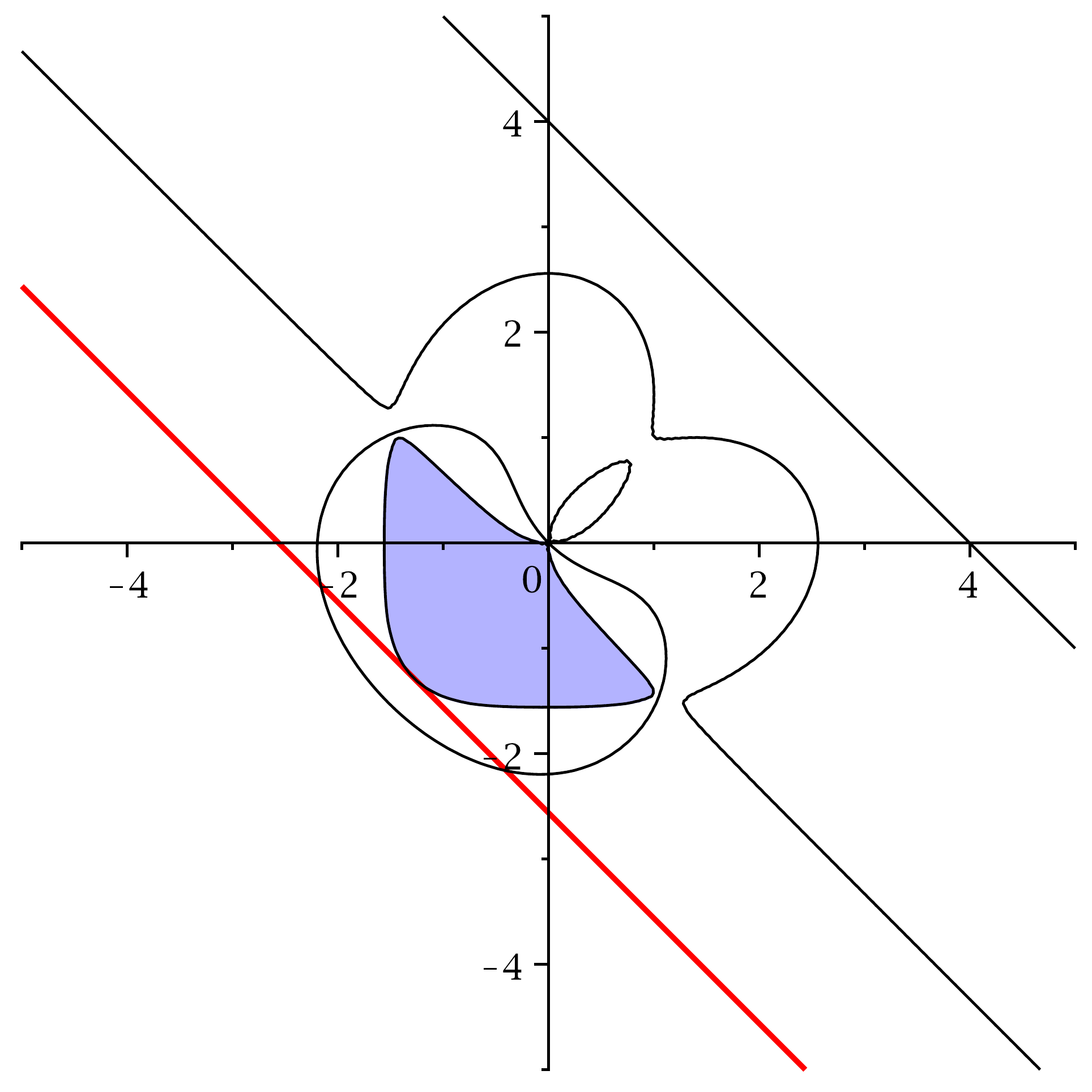}
\caption{Feasible region of PMI problem (\ref{eq::PMIex1}) in
Example \ref{dimension:3:PMI}.}\label{PMI:example:dim3}
\end{figure}
\end{exm}

\begin{exm}\label{dimension:4:PMI}
Consider the following PMI problem:
\begin{equation}\label{eq::PMIex2}
\left\{
\begin{aligned}
\min\limits_{x\in\mR^2}&\ f(x) = x_1-x_2 \\
\text{s.t.}&\ G(x) = {\left[
\begin{array}{cccc}
10-x_1^2-x_2^2 & x_1  & -x_1^2+x_2 & x_2+3   \\
x_1  & x^2_2  & x_1-x^2_2 & x_1  \\
-x^2_1+x_2 & x_1-x_2^2 & x_1+2x^2_2& x_2 \\
 x_2+3 & x_1 & x_2 & x^2_2
\end{array}
\right]}\succeq 0.
\end{aligned}\right.
\end{equation}

Similar to Example \ref{dimension:3:PMI}, we obtain the feasible set
of (\ref{eq::PMIex2}) by Descartes' rule of signs \cite{PMI2006} and
show it shaded in Figure \ref{PMI:example:dim4}. Applying Algorithm
\ref{alg:exchange:method} to the reformulation (\ref{PMI:2}) of
problem (\ref{eq::PMIex2}), we get global minimizer
$x^*\approx(0.5093,-1.0678)$ and minimum $f(x^*)\approx 1.5771$
which are certified by $\text{Obj}_2=-9.4692\times 10^{-5}$. From
Figure \ref{PMI:example:dim4}, we can see this result is
accurate.\hfill$\square$
\begin{figure}
\includegraphics[width=0.4\textwidth]{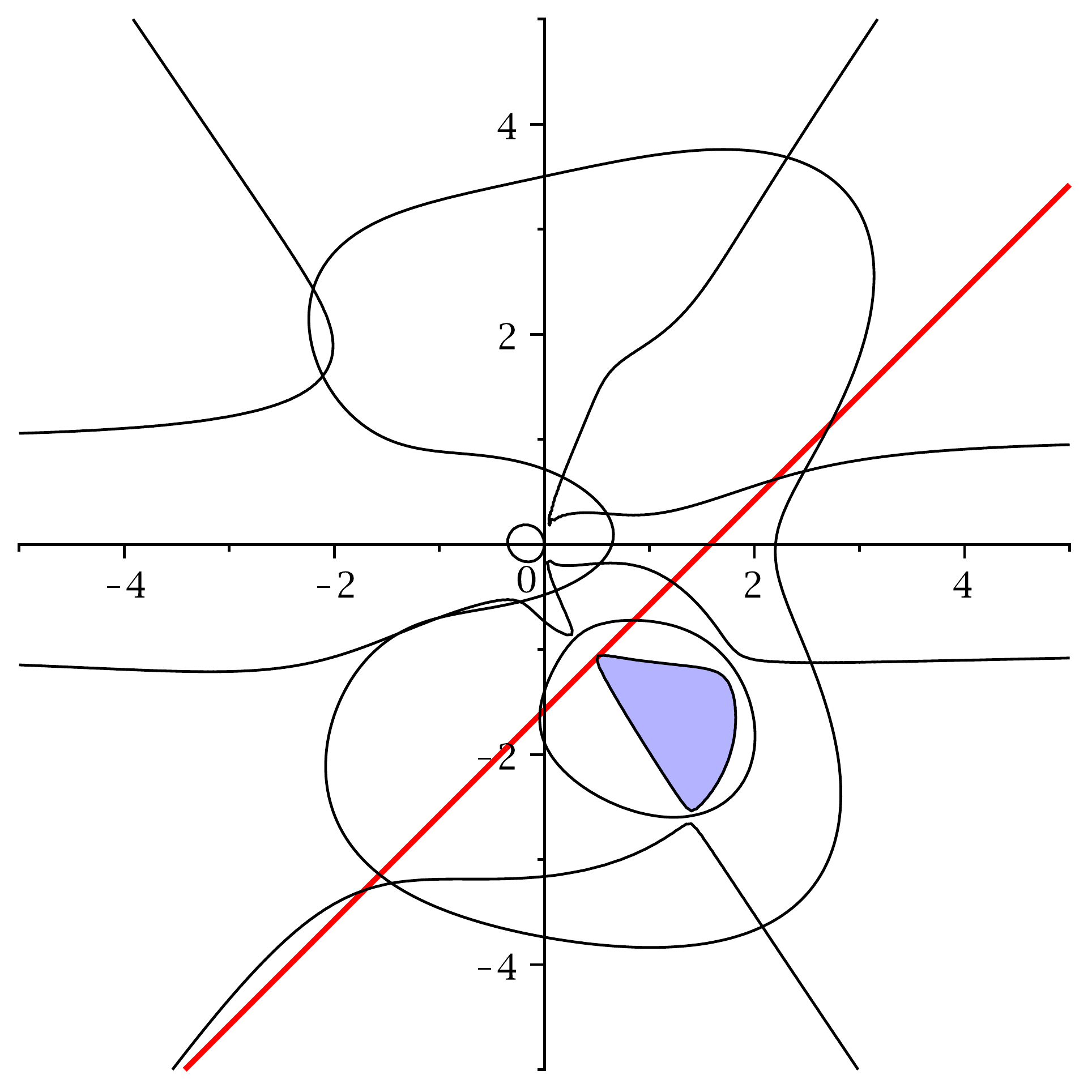}
\caption{Feasible region of PMI
problem (\ref{eq::PMIex2}) in Example
\ref{dimension:4:PMI}.}\label{PMI:example:dim4}
\end{figure}
\end{exm}
We end this subsection by pointing out a trick hidden in the
reformulation (\ref{PMI:2}) of (\ref{PMI:1}). PMI optimization
problem (\ref{PMI:1}) can be regarded as a SIPP problem with
noncompact index set $\widetilde{U}=\RR^m$. Since the constraint
function $g(x,u)$ is homogenous in $u$, we can restrict
$\widetilde{U}$ to the unit sphere $U$. By Theorem
\ref{theorem:exchange}, to guarantee the convergence of Algorithm
\ref{alg:exchange:method}, the optimum of $(Q_i^k)$ needs to be
achievable for each $k$ which might fail if $U$ is noncompact. The
reformulation (\ref{PMI:2}) of (\ref{PMI:1}) gives us a clue for
dealing with SIPP with noncompact $U$ by the technique of
homogenization. We will go into detail about this technique in next
section.

\section{SIPP with noncompact set $U$ }\label{sec::noncompact}
At some $k$-th iteration of Algorithm \ref{alg:exchange:method}, if
the global minima $g_i^k$ of $(Q_i^k)$ are not achievable for all
$x_i^k\in S_k$, then by Remark \ref{rk::jacobian}, either
\begin{enumerate}[{case} 1.]
\item $T_i^k=\emptyset$, then $U_{k+1}$ can not be updated and
consequently $S_{k+1}$ remains the same as $S_k$, or
\item $U_{k+1}$
is updated by $T_i^k$ which consists of KKT points or singular
points of the feasible set of $(Q_i^k)$ rather than global
minimizers.
\end{enumerate}
As we have discussed in Subsection \ref{subsec::exchange}, the
convergence property of Algorithm \ref{alg:exchange:method} might
fail or wrong global minimizers might be outputted if the above
cases happen. For example,
\begin{example}\label{ex::count} Consider the following problem:
\begin{equation}\label{eq::ce}
\left\{
\begin{aligned}
f^*:=\underset{x_1,x_2\in\mR}{\min}&\ -x_1-x_2\\
\text{s.t.}&\ x_1(u_1^2-1)+(x_2-u_1u_2)^2\ge 0,\ \forall~u_1,u_2\in\mR,\\
&\ x_1^2+x_2^2=2.
\end{aligned}\right.
\end{equation}
We choose $u_1, u_2$ such that $x_2-u_1u_2=0$. By letting $u_1$ tend
to infinity and $0$ respectively, we obtain that $x_1=0$ for any
feasible point $x$. Therefore, there are only two feasible points
$(0, \pm\sqrt{2})$ and the global minimum is $-\sqrt{2}$ with
minimizer $(0,\sqrt{2})$.

We claim that Algorithm \ref{alg:exchange:method} {\itshape fails}
to solve (\ref{eq::ce}) if we set initial
$U_0=\{(u^{0}_1,u^{0}_2)\}$ such that
\[
(u^0_1, u^0_2)\notin \mathcal{U}:=\{u\in\RR^2\mid u_1u_2=\sqrt{2},
u_1^2< 2\sqrt{2}-2\}.
\]
We prove it in the following. First, we show that for any
$(u_1,u_2)\in\RR^2$ there always exists $(\bar{x}_1, \bar{x}_2)$
with $\bar{x}_1>0, \bar{x}_2>0$ such that
\[
g(\bar{x}, u):=\bar{x}_1(u_1^2-1)+(\bar{x}_2-u_1u_2)^2\ge 0, \quad
\bar{x}_1^2+\bar{x}_2^2=2.
\]
This is true if $g((0,\sqrt{2}),u)>0$ or $g((\sqrt{2},0),u)>0$ by
the continuity of $g(x,u)$. Now we assume
\[
g((0,\sqrt{2}),u)\le 0\quad\text{and}\quad g((\sqrt{2},0),u)\le 0.
\]
From the first inequality, we get $u_1u_2=\sqrt{2}$. Then by the second
inequality, we have $u_1^2\le 1-\sqrt{2}$ which is a contradiction. Therefore,
the following subproblem
\begin{equation*}
(P_0): \left\{
\begin{aligned}
 f^{\min}_0 :=\min\limits_{x\in\mR^2}&\ -x_1-x_2\\
\text{s.t.}&\ x_1^2+x_2^2=2,\ g(x,u)\geq 0,\ \forall~u_1,u_2\in\mR,\\
\end{aligned}
\right.
\end{equation*}
has global minimizer $S_0=\{(\tilde{x}_1,\tilde{x}_2)\}$ with
$\tilde{x}_1>0,\tilde{x}_2>0$. Then we solve subproblem
\begin{equation}
(Q^0):\quad \left.
\begin{aligned}
 g^0 :=\min\limits_{u\in\mR^2}&\
g(\tilde{x},u)=\tilde{x}_1(u_1^2-1)+(\tilde{x}_2-u_1u_2)^2.\\
\end{aligned}
\right.
\end{equation}

Obviously, $g^0=-\tilde{x}_1$ is not achievable. Applying Jacobian
SDP relaxation Algorithm \ref{alg:finit:cons:Jacobian}, we obtain
$T^0=\{(0,0)\}$ which consists of the only critical point $(0,0)$ of
map $g(x^0,u)$ with critical value $\tilde{x}_2^2-\tilde{x}_1$. If
$\tilde{x}_2^2-\tilde{x}_1\ge 0$, then Algorithm
\ref{alg:exchange:method} terminates and outputs
$X^*=\{(\tilde{x}_1,\tilde{x}_2)\}$ which is a wrong solution. Now
we assume $\tilde{x}_2^2-\tilde{x}_1<0$ and continue. By Algorithm
\ref{alg:exchange:method}, $U_1=\{(\bar{u}_1,\bar{u}_2), (0,0)\}$.
Then we go to the next iteration and solve
\begin{equation*}
(P_1): \left\{
\begin{aligned}
 f^{\min}_1 :=\min\limits_{x\in\mR^2}&\ -x_1-x_2\\
\text{s.t.}&\ x_2^2-x_1\ge 0,\ g(x,\bar{u})\geq 0,\\
&\ x_1^2+x_2^2=2.
\end{aligned}
\right.
\end{equation*}
Let $K_1$ be the feasible set of $(P_1)$, then
\begin{enumerate}[{case} 1.]
\item There exists no $(\bar{x}_1,\bar{x}_2)\in K_1$ with
$\bar{x}_1>0,\bar{x}_2>0$. The global minimizer of $(P_1)$ is
$S_1=\{(0,\sqrt{2})\}$ and
$g^1:=\underset{u\in\RR^2}{\min}g((0,\sqrt{2}),u)\ge 0$. Therefore,
the correct global solution of (\ref{eq::ce}) is outputted. In this
case, by the continuity of $g(x,u)$, we have
$g((0,\sqrt{2}),\bar{u})\le 0$ and $g((1,1),\bar{u})<0$. From these
two inequalities, we get $(\bar{u}_1,\bar{u}_2)\in \mathcal{U}$.
\item There exists $(\bar{x}_1,\bar{x}_2)\in K_1$ with
$\bar{x}_1>0,\bar{x}_2>0$. Then the global minimizer of $(P_1)$ is
$S_1=\{(\hat{x}_1,\hat{x}_2)\}$ with $\hat{x}_1>0,\hat{x}_2>0$.
Similar to $g^0$, $g^1$ is not achievable and
$U_1=\{(\bar{u}_1,\bar{u}_2), (0,0)\}$ can not be updated.
Consequently, the same process will be repeated in the following
iterations.
\end{enumerate}
Now we have proved the claim. Since the set $\mathcal{U}$ is a
subset of a Zariski closed set of $\RR^2$, Algorithm
\ref{alg:exchange:method} fails if we choose a generic initial
$U_0=\{(u_1,u_2)\}$.\hfill$\square$
\end{example}

Hence, Algorithm \ref{alg:exchange:method} might fail to solve SIPP
problem $(P)$ if the optima of subproblems $(Q_i^k)$ can not be
reached for all $x_i^k\in S_k$ which might happen when $U$ is
noncompact. As we have mentioned at the end of Section
\ref{sec::compact}, the reformulation (\ref{PMI:2}) of (\ref{PMI:1})
sheds light on this issue by the technique of homogenization. In the
following, we apply this technique to general SIPP problem $(P)$
with noncompact index set $U$.

For given polynomial $q(u)\in\mR[u]:=\mR[u_1,\ldots,u_p]$ with
degree $d=\deg(q)$, let $\tilde{q}(\tilde{u})=u_{0}^{d}q( u/u_0 )$
be the homogenization of $q(u)$ where $\tilde{u}=(u_0,u)\in
\mR^{p+1}$. Define
\[
\tilde{g}(x,\tilde{u})=u_{0}^{d_g}g(x, u/u_0 )\ \ \text{where}\ \
d_g=\deg_u{g(x,u)}
\]
and
\begin{equation*}
\begin{aligned}
U=&\ \{u\in \mR^p| h_{1}(u)\geq 0,\cdots,h_{m_{1}}(u)\geq 0\},\\
U_{0} =&\ \{\tilde{u}\in \mR^{p+1}|\tilde{h}_{1}(\tilde{u})\geq
0,\cdots,\tilde{h}_{m_{1}}(\tilde{u})\geq 0,u_0>0,\|\tilde{u}\|^2=1\},\\
\widetilde{U} =&\ \{\tilde{u}\in
\mR^{p+1}|\tilde{h}_{1}(\tilde{u})\geq
0,\cdots,\tilde{h}_{m_{1}}(\tilde{u})\geq 0,u_0\geq
0,\|\tilde{u}\|^2=1\}.
\end{aligned}
\end{equation*}

\begin{prop}\label{prop:equivalent}
$q(u)\geq 0$ on $U$
if and only if $\tilde{q}(\tilde{u})\geq 0$ on ${\sf closure}(U_0)$.
\end{prop}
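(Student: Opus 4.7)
The plan is to prove the two implications separately, using the key algebraic identity $\tilde{q}(u_0,u) = u_0^d q(u/u_0)$ (and the analogous identity for each $\tilde{h}_i$) whenever $u_0 > 0$. The continuity of $\tilde q$ will take care of passing between $U_0$ and its closure.

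For the forward direction, I would start with an arbitrary point $\tilde{u} = (u_0, u) \in U_0$. Since $u_0 > 0$ and $\tilde{h}_i(\tilde{u}) = u_0^{d_{h_i}} h_i(u/u_0) \geq 0$ for every $i$, dividing by the positive quantity $u_0^{d_{h_i}}$ gives $h_i(u/u_0) \geq 0$, so $u/u_0 \in U$. The hypothesis then yields $q(u/u_0) \geq 0$, and hence $\tilde{q}(\tilde{u}) = u_0^{d} q(u/u_0) \geq 0$. This shows $\tilde{q} \geq 0$ on $U_0$; since $\tilde{q}$ is a polynomial and hence continuous, the inequality extends to $\mathrm{closure}(U_0)$.

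For the converse, given any $u \in U$, I would construct an explicit preimage in $U_0$ by scaling $(1,u)$ to the unit sphere. Set
\begin{equation*}
t = \frac{1}{\sqrt{1+\|u\|^2}}, \qquad \tilde{u} = (t, tu).
\end{equation*}
Then $t>0$, $\|\tilde{u}\|^2 = 1$, and for each $i$,
\begin{equation*}
\tilde{h}_i(\tilde{u}) = t^{d_{h_i}} h_i(tu/t) = t^{d_{h_i}} h_i(u) \geq 0,
\end{equation*}
so $\tilde{u} \in U_0 \subseteq \mathrm{closure}(U_0)$. The hypothesis gives $\tilde{q}(\tilde{u}) \geq 0$, i.e., $t^{d} q(u) \geq 0$. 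Dividing by $t^{d} > 0$ yields $q(u) \geq 0$, as required.

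There is no real obstacle; the only point that requires a touch of care is ensuring that the condition $u_0 > 0$ in the definition of $U_0$ is precisely what lets the homogenization identities be inverted, and that the continuity argument is only needed in one direction (the forward implication), since the explicit preimage in the converse already lies in the open part $U_0$ itself.
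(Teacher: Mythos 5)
Your proof is correct and follows essentially the same route as the paper: both directions rest on the scaling $(1,u)\mapsto(1,u)/\sqrt{1+\|u\|^2}$ onto the unit sphere together with the homogenization identities, and continuity handles the passage to ${\sf closure}(U_0)$ (the paper phrases one direction as a contradiction and uses a sequence from the closure instead of your pointwise-on-$U_0$-then-continuity step, but these are only cosmetic differences). No gaps.
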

\begin{proof}

 ``If " direction.
Suppose there exists $v\in U$ such that $q(v)<0$. For $i\in [m_1]$,
we have $h_i(v)\geq 0$.  Let
 $\tilde{v}=(\frac{1}{\sqrt{1+\|v\|^2}},\frac{v}{\sqrt{1+\|v\|^2}})$,
then
\begin{equation*}\label{def:hi:hom}
\tilde{h}_{i}(\tilde{v})=(1+\|v\|^2)^{-\frac{\deg(h_i)}{2}}h_i(v)\geq
0, \quad i\in [m_1],
\end{equation*}
which implies $\tilde{v}\in U_0$ and
\begin{equation*}\label{def:G:hom}
\tilde{q}(\tilde{v}) =
 (1+\|v\|^2)^{-\frac{d}{2}}q(v)<0.
\end{equation*}
It contradicts the assumption that $\tilde{q}(\tilde{v})\geq 0$ on
${\sf closure}(U_0)$.

 ``Only if"  direction. Let $\tilde{v} = (v_0,v)\in {\sf
 closure}(U_0)$,
then there exists a sequence $\tilde{v}^k=(v^k_0,v^k)\in U_0$ such
that $\underset{k\rightarrow\infty}{\lim}(v^k_0,v^k)= (v_0,v)$ with
$v^k_0> 0$ for all $k$. We have
\begin{equation*}
h_i(v^k/v_0^k ) = ({v_0^k})^{-\deg(h_i)} \tilde{h}_i(\tilde{v}^k)\ge
0,\ \ i\in [m_1], \ \ \text{for all}\ k.
 \end{equation*}
Therefore, the sequence $\{v^k/v^k_0\}\in U$ and $q( v^k/v^k_0 )\geq
0$. Since $q$ is continuous,
 \begin{equation*}
 \tilde{q}(\tilde{v}) = \lim\limits_{k\rightarrow \infty} \tilde{q}(\tilde{v}^k) = \lim\limits_{k\rightarrow \infty} (v^k_0)^{d}q( v^k/v^k_0 )\geq 0,
 \end{equation*}
which shows $\tilde{q}(\tilde{v})\geq 0$ on ${\sf closure}(U_0)$.
The proof is completed.
\end{proof}
\begin{cor}\label{cor::Rp}
A polynomial $q(u)\geq 0$ on $\mR^p$ if and only if
$\tilde{q}(\tilde{u})\geq 0$ on $\{\tilde{u}\in\mR^{p+1}\mid
\Vert\tilde{u}\Vert^2=1\}$.
\end{cor}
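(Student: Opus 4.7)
The plan is to deduce the corollary directly from Proposition \ref{prop:equivalent} by specializing $U$ to $\mR^p$. Concretely, take $m_1=0$, so that there are no constraints $h_i$. Then
\[
U_0 = \{\tilde{u}\in\mR^{p+1}\mid u_0>0,\ \|\tilde{u}\|^2=1\},
\]
and ${\sf closure}(U_0)$ is the closed upper hemisphere $\{\tilde{u}\in\mR^{p+1}\mid u_0\geq 0,\ \|\tilde{u}\|^2=1\}$. Proposition \ref{prop:equivalent} already gives the equivalence ``$q(u)\geq 0$ on $\mR^p$ iff $\tilde{q}(\tilde{u})\geq 0$ on ${\sf closure}(U_0)$.'' Hence the entire content of the corollary, beyond what the Proposition provides, is to replace the closed upper hemisphere by the full unit sphere, which amounts to proving non-negativity on the lower hemisphere $u_0\leq 0$.

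For the reverse direction this is immediate: since ${\sf closure}(U_0)$ is a subset of the full sphere, non-negativity of $\tilde{q}$ on the full sphere implies non-negativity on ${\sf closure}(U_0)$, and then the Proposition yields $q(u)\geq 0$ on $\mR^p$.

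For the forward direction, the key observation is a parity argument on the degree $d=\deg(q)$. If $q\equiv 0$ the statement is trivial, so assume $q\not\equiv 0$. I claim $d$ must be even: for any nonzero $v\in\mR^p$, the univariate polynomial $t\mapsto q(tv)$ is non-negative on $\mR$, and its leading behavior at $t\to\pm\infty$ is controlled by the degree-$d$ homogeneous component of $q$ evaluated at $v$; non-negativity for both $t\to+\infty$ and $t\to-\infty$ forces $d$ to be even (picking $v$ so that the top homogeneous component does not vanish, which is possible since it is a nonzero polynomial in $v$). Once $d$ is even, $\tilde{q}$ is a homogeneous polynomial of even degree, so $\tilde{q}(-\tilde{u})=\tilde{q}(\tilde{u})$. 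Combining this symmetry with the Proposition (which gives $\tilde{q}\geq 0$ on the closed upper hemisphere) yields $\tilde{q}\geq 0$ on the lower hemisphere as well, hence on the entire unit sphere.

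The only nontrivial step is the parity observation; everything else is a direct application of Proposition \ref{prop:equivalent} together with the homogeneity of $\tilde{q}$. I do not anticipate any genuine obstacle, since the parity argument is a standard fact about globally non-negative polynomials on $\mR^p$.
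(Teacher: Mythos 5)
Your proof is correct, but it is organized differently from the paper's. The paper's own argument is a single remark: it asks the reader to re-inspect the proof of Proposition \ref{prop:equivalent} and observe that the inequality $u_0>0$ ``can be removed'' from $U_0$, so that the relevant set becomes the whole sphere, which is already closed. You instead use Proposition \ref{prop:equivalent} as a black box (with $m_1=0$, giving the closed upper hemisphere) and then pass from the hemisphere to the full sphere by a separate lemma: a nonzero polynomial $q$ that is nonnegative on $\mR^p$ must have even degree $d$ (your one-variable restriction $t\mapsto q(tv)$ with $q_d(v)\neq 0$ does this cleanly), hence $\tilde q(-\tilde u)=\tilde q(\tilde u)$ and nonnegativity propagates to the lower hemisphere. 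This buys something real: if one literally reruns the ``only if'' part of the proposition's proof with $u_0>0$ dropped, one faces points $\tilde v=(v_0,v)$ with $v_0<0$, where $\tilde q(\tilde v)=v_0^{d}q(v/v_0)$ and the sign of $v_0^{d}$ is exactly the issue; so the paper's one-line justification implicitly relies on the same parity fact you prove explicitly. In short, your route is slightly longer but self-contained and makes visible the step the paper glosses over, while the paper's route is shorter at the cost of leaving that verification to the reader. (Minor cosmetic point: rather than ``$m_1=0$'' you could equivalently take the single trivial constraint $h_1\equiv 1$, which keeps you formally inside the setting in which $U$, $U_0$, $\widetilde U$ are defined; nothing in the proposition's proof changes.)
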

\begin{proof}
From the proof of Proposition \ref{prop:equivalent}, we can see the
inequality $u_0>0$ can be removed from $U_0$ such that $q(u)\geq 0$
on $\mR^p$ if and only if $\tilde{q}(\tilde{u})\geq 0$ on
\begin{equation*}
{\sf closure}(\{\tilde{u}\in\mR^{p+1}\mid\Vert\tilde{u}\Vert^2=1\})
=\{\tilde{u}\in\mR^{p+1}\mid\Vert\tilde{u}\Vert^2=1\}.
\end{equation*}
\end{proof}
By Proposition \ref{prop:equivalent}, we have the following
equivalent reformulation of problem $(P)$:
\begin{equation*}\label{semiinfinite:pp:noncompact:Y}
(P_0):\left\{
\begin{aligned}
 f^* := \min\limits_{x\in X}&\ f(x)\\
\text{s.t.}&\ \tilde{g}(x,\tilde{u})\geq0, ~\forall~\tilde{u}\in
{\sf closure}(U_{0}).\\
\end{aligned}
\right.
\end{equation*}
Some natural questions arise: how to get the explicit expression of
semi-algebraic set ${\sf closure }(U_0)$? Is it true that  ${\sf
closure}(U_0)=\widetilde{U}$? Clearly, we have
\begin{equation}\label{eq::YY}
{\sf closure}(U_0)\subseteq\widetilde{U}.
\end{equation}
Unfortunately, the equality does not always hold even if set $U$ is
compact (cf. \cite[Example 5.2]{NieDiscri}).
\begin{defi}\emph{(}\cite{NieDiscri}\emph{)}
$U$ is closed at
$\infty$ if ${\sf closure}(U_0)=\widetilde{U}$.
\end{defi}

Since it might be hard to express ${\sf closure}(U_0)$ for a given
particular SIPP problem, we consider to solve the following problem
in general:
\begin{equation*}\label{semiinfinite:pp:noncompact:Y:re}
(\widetilde{P}):\left\{
\begin{aligned}
 \tilde{f}^* := \min\limits_{x\in X} &\ f(x)\\
\text{s.t.}&\ \tilde{g}(x,\tilde{u})\geq0, ~\forall~\tilde{u}\in \widetilde{U}.\\
\end{aligned}
\right.
\end{equation*}
As set $\widetilde{U}$ is compact, the semidefinite relaxation
Algorithm \ref{alg:exchange:method} in Section \ref{sec::compact}
can successfully solve this problem with any arbitrary initial
$U_0$. Next we investigate the relation between problem $(P)$ and
problem $(\widetilde{P})$.

We define
\[
\begin{aligned}
M&=\{x\in \mR^n|g(x,u)\geq 0, \ \forall\ u\in U\}.\\
\widetilde{M}&=\{x\in \mR^n|\tilde{g}(x,\tilde{u})\geq
0,~\forall~\tilde{u}\in \widetilde{U}\}.
\end{aligned}
\]
\begin{prop}\label{pro:nonempty:set}
We have $\widetilde{M}\subseteq M$ and the equality holds if $U$ is
closed at $\infty$.
\end{prop}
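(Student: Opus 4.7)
The plan is to derive both statements directly from Proposition \ref{prop:equivalent} together with the inclusion \eqref{eq::YY}. Fix $x \in \mR^n$ and apply Proposition \ref{prop:equivalent} to the polynomial $q(u) := g(x,u) \in \mR[u]$, whose homogenization in $u$ is precisely $\tilde{g}(x, \tilde{u})$. The proposition then says
\[
x \in M \quad \Longleftrightarrow \quad g(x,u) \geq 0 \text{ on } U \quad \Longleftrightarrow \quad \tilde{g}(x,\tilde{u}) \geq 0 \text{ on } {\sf closure}(U_0).
\]

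For the inclusion $\widetilde{M} \subseteq M$, I would take $x \in \widetilde{M}$, so $\tilde{g}(x,\tilde{u}) \geq 0$ for every $\tilde{u} \in \widetilde{U}$. By \eqref{eq::YY}, ${\sf closure}(U_0) \subseteq \widetilde{U}$, so a fortiori $\tilde{g}(x,\tilde{u}) \geq 0$ on ${\sf closure}(U_0)$, and the equivalence above gives $x \in M$.

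For the equality under the hypothesis that $U$ is closed at $\infty$, the definition gives ${\sf closure}(U_0) = \widetilde{U}$, so the two quantifier statements ``$\tilde{g}(x,\tilde{u}) \geq 0$ on ${\sf closure}(U_0)$'' and ``$\tilde{g}(x,\tilde{u}) \geq 0$ on $\widetilde{U}$'' coincide; combined with the equivalence from Proposition \ref{prop:equivalent}, this yields $M = \widetilde{M}$.

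There is no real obstacle here, since Proposition \ref{prop:equivalent} is already established; the only point requiring a moment of care is the verification that applying the proposition to $q(u) = g(x,u)$ (treated with $x$ frozen and $u$ the variable) produces the homogenization $\tilde{g}(x,\tilde{u})$ as defined in the paper. This is immediate from $d_g = \deg_u g(x,u)$ and the formula $\tilde{g}(x,\tilde{u}) = u_0^{d_g} g(x, u/u_0)$.
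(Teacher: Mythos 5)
Your proposal is correct and follows essentially the same route as the paper: apply Proposition \ref{prop:equivalent} with $x$ fixed and $q(u)=g(x,u)$ to rewrite $M$ as $\{x\mid \tilde{g}(x,\tilde{u})\ge 0\ \forall\,\tilde{u}\in{\sf closure}(U_0)\}$, then conclude via the inclusion \eqref{eq::YY} and, for equality, the definition of closedness at $\infty$. The paper's proof is just a terser statement of exactly this argument.
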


\begin{proof}
By Proposition \ref{prop:equivalent}, we have
\[
M=\{x\in \mR^n|\tilde{g}(x,\tilde{u})\geq 0, \ \forall\ \tilde{u}\in
{\sf closure}(U_0)\}.
\]
Then the conclusion follows due to the relationship (\ref{eq::YY}).
\end{proof}
Consequently, we have
\begin{theorem}\label{theorem:equivalent:non}
$\tilde{f}^*\geq f^*$ and the equality holds if $U$ is closed at
$\infty$.
\end{theorem}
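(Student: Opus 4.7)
The plan is to deduce the theorem directly from Proposition \ref{pro:nonempty:set}, which already compares the $x$-feasible sets $M$ and $\widetilde{M}$. The two problems $(P)$ and $(\widetilde{P})$ share the same objective $f(x)$ and the same outer constraint $x\in X$; only the constraint encoding the infinite family of inequalities differs. So I would first rewrite
\[
f^* \;=\; \min_{x\in X\cap M} f(x), \qquad \tilde f^* \;=\; \min_{x\in X\cap \widetilde M} f(x),
\]
making explicit that these are minimizations of the same function on two nested feasible sets.

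Next I would invoke the inclusion $\widetilde{M}\subseteq M$ established in Proposition \ref{pro:nonempty:set}. Since a minimum taken over a smaller set is at least as large, this immediately yields $\tilde f^* \ge f^*$, which is the first half of the claim. I would state this in one line, with no extra work required.

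For the equality case, I would assume $U$ is closed at $\infty$, so that by the second half of Proposition \ref{pro:nonempty:set} we have $\widetilde{M}=M$. Then $X\cap \widetilde{M} = X\cap M$, and the two minima coincide: $\tilde f^* = f^*$. This completes the proof.

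There is essentially no obstacle here: all the substantive work, namely the homogenization equivalence $q(u)\ge 0$ on $U \Leftrightarrow \tilde q(\tilde u)\ge 0$ on $\mathsf{closure}(U_0)$, and the inclusion $\mathsf{closure}(U_0)\subseteq \widetilde U$ with equality under closedness at $\infty$, has already been absorbed into Propositions \ref{prop:equivalent} and \ref{pro:nonempty:set}. Theorem \ref{theorem:equivalent:non} is a one-step corollary of the set inclusion between the two feasible regions, so the proof should be only a few lines.
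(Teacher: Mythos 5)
Your proposal is correct and is essentially the paper's own argument: the theorem is stated as an immediate consequence of Proposition \ref{pro:nonempty:set}, since both problems minimize the same $f$ over $X\cap\widetilde{M}$ and $X\cap M$ respectively, and $\widetilde{M}\subseteq M$ with equality when $U$ is closed at $\infty$. (The inequality also covers the degenerate case $X\cap\widetilde{M}=\emptyset$, where $\tilde{f}^*=+\infty$, as in the paper's Example \ref{counter:example}.)
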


Corollary \ref{cor::Rp} shows that $U=\mR^p$ is closed at $\infty$
and therefore,
\begin{cor}The following two problems are equivalent:
\[
\left\{
\begin{aligned}
\min\limits_{x\in X} &\ f(x)\\
\text{s.t.}&\ g(x,u)\geq0, ~\forall~u\in\mR^p,\\
\end{aligned}
\right.\qquad
\left\{
\begin{aligned}
\min\limits_{x\in X} &\ f(x)\\
\text{s.t.}&\ \tilde{g}(x,\tilde{u})\geq0, ~\forall~\tilde{u}\in
\widetilde{U},\\
\end{aligned}
\right.
\]
where
$\widetilde{U}=\{\tilde{u}\in\mR^{p+1}\mid\Vert\tilde{u}\Vert^2=1\}$.
\end{cor}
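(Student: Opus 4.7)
The plan is to reduce this corollary directly to Corollary \ref{cor::Rp}, which already delivers the desired equivalence at the level of a single polynomial. For each fixed $x\in X$, view $g(x,u)$ as a polynomial $q_x(u)\in\mR[u]$. Its $u$-homogenization is, by the definition given just before Proposition \ref{prop:equivalent}, exactly $\tilde{g}(x,\tilde{u}) = u_0^{d_g}g(x,u/u_0)$, since $d_g:=\deg_u g(x,u)$ is a single integer independent of $x$ (it is the total degree of $g$ viewed as an element of $\mR[x][u]$). Thus for every fixed $x$, the polynomial $\tilde{g}(x,\tilde{u})$ is the homogenization of $q_x$ in the sense required by Corollary \ref{cor::Rp}.

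Applying Corollary \ref{cor::Rp} to $q_x$ for each $x\in X$ yields that
\[
g(x,u)\ge 0\ \text{for all } u\in\mR^p \quad\Longleftrightarrow\quad \tilde{g}(x,\tilde{u})\ge 0\ \text{for all } \tilde{u}\in\widetilde{U},
\]
where $\widetilde{U}=\{\tilde{u}\in\mR^{p+1}\mid \|\tilde{u}\|^2=1\}$. Consequently, the feasible sets of the two optimization problems, both of which sit inside $X$, coincide exactly. Since the objective function $f(x)$ and the outer domain $X$ are identical in both problems, equal feasible sets force equal optimal values and identical sets of optimizers, which is precisely the claim of equivalence.

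The argument has no real obstacle: the only bookkeeping is verifying that the construction of $\tilde{g}$ used in the statement matches, pointwise in $x$, the construction used in Corollary \ref{cor::Rp}, which is handled by choosing the uniform degree $d_g=\deg_u g$. In particular, the noncompactness issue discussed in Example \ref{ex::count} is defused here for free, because $U=\mR^p$ is closed at $\infty$ (this is the content of Corollary \ref{cor::Rp}), so Theorem \ref{theorem:equivalent:non} also applies and gives an alternative one-line derivation.
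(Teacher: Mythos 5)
Your proposal is correct and takes essentially the paper's own route: the paper justifies this corollary by noting (via Corollary \ref{cor::Rp}) that $U=\mR^p$ is closed at $\infty$ and then invoking Theorem \ref{theorem:equivalent:non}, which is exactly the ``one-line derivation'' you mention at the end, while your main argument applies Corollary \ref{cor::Rp} pointwise in $x$ to identify the two feasible sets --- the same reduction, just packaged at the level of feasible sets rather than optimal values. The only point to be careful about (a caveat inherited from the statement itself rather than a defect of your argument relative to the paper's) is that $\tilde{g}(x,\cdot)$ is the degree-$d_g$ homogenization and coincides with the homogenization of $q_x$ in the sense of Corollary \ref{cor::Rp} only when $\deg_u g(x,\cdot)=d_g$; when the $u$-degree drops at a feasible $x$ one still gets nonnegativity on the whole sphere provided $d_g$ is even, by the same evenness/parity observation that underlies Corollary \ref{cor::Rp}, so that assumption is worth making explicit.
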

\addtocounter{theorem}{-7}
\begin{example}[Continued]\label{counterexample}
We reformulate the problem (\ref{eq::ce}) as
\begin{equation}\label{eq::ce2}
\left\{
\begin{aligned}
\tilde{f}^*:=\underset{x_1,x_2\in\mR}{\min}&\ -x_1-x_2\\
\text{s.t.}&\ x_1(u_1^2-u_0^2)+(x_2u_0^2-u_1u_2)^2\ge 0,\
\forall~\tilde{u}\in\widetilde{U},\\
&\ x_1^2+x_2^2=2,
\end{aligned}\right.
\end{equation}
where $\widetilde{U}=\{(u_0,u_1,u_2)\in\mR^3\mid
u_0^2+u_1^2+u_2^2=1\}$. By choosing $u_0=1$, we know
$\widetilde{M}\supseteq\{(0,\pm\sqrt{2})\}$ which, obviously, are
feasible to (\ref{eq::ce2}). Therefore, $\tilde{f}^*=f^*=-\sqrt{2}$
with minimizer $(0,\sqrt{2})$. Choosing $U_0=\{(1,0,0)\}$ in
Algorithm \ref{alg:exchange:method}, Figure \ref{ex} shows the
feasible regions of subproblems $(P_k)$ for iterations
$k=0,1,\cdots,5$. Let $h(x)=x_1^2+x_2^2-2$. At $i$-th iteration, the
feasible region is defined by
\[
K_i:=\{x\in\mR^2\mid h(x)=0, g_0(x)\ge 0,\ldots,g_i(x)\ge 0\}
\]
where
\[
\begin{aligned}
g_0&=-x_1+x_2^2,\\
g_1&\approx 0.026046-0.31963x_1-0.19679x_2+0.37171x_2^2, \\
g_2&\approx 0.054893-0.11577x_1-0.18811x_2+0.16116x_2^2, \\
g_3&\approx 0.06865-0.049084x_1-0.14992x_2+0.081854x_2^2, \\
g_4&\approx 0.072498-0.025711x_1-0.12039x_2+0.049977x_2^2, \\
g_5&\approx 0.073368-0.018151x_1-0.10683x_2+0.038891x_2^2.\\
\end{aligned}
\]
For each $i$, the feasible region $K_i$ is the intersection of the
left parts of the circle $x_1^2+x_2^2=2$ devided by hyperbolas
$g_i(x)=0, i=0,\cdots,5$. From Figure \ref{ex}, we can see the
minimizers of subproblems $(P_k)$ converge to $(0,\sqrt{2})$ which
is the minimizer of problem (\ref{eq::ce}).
\begin{figure}
\includegraphics[width=0.45\textwidth]{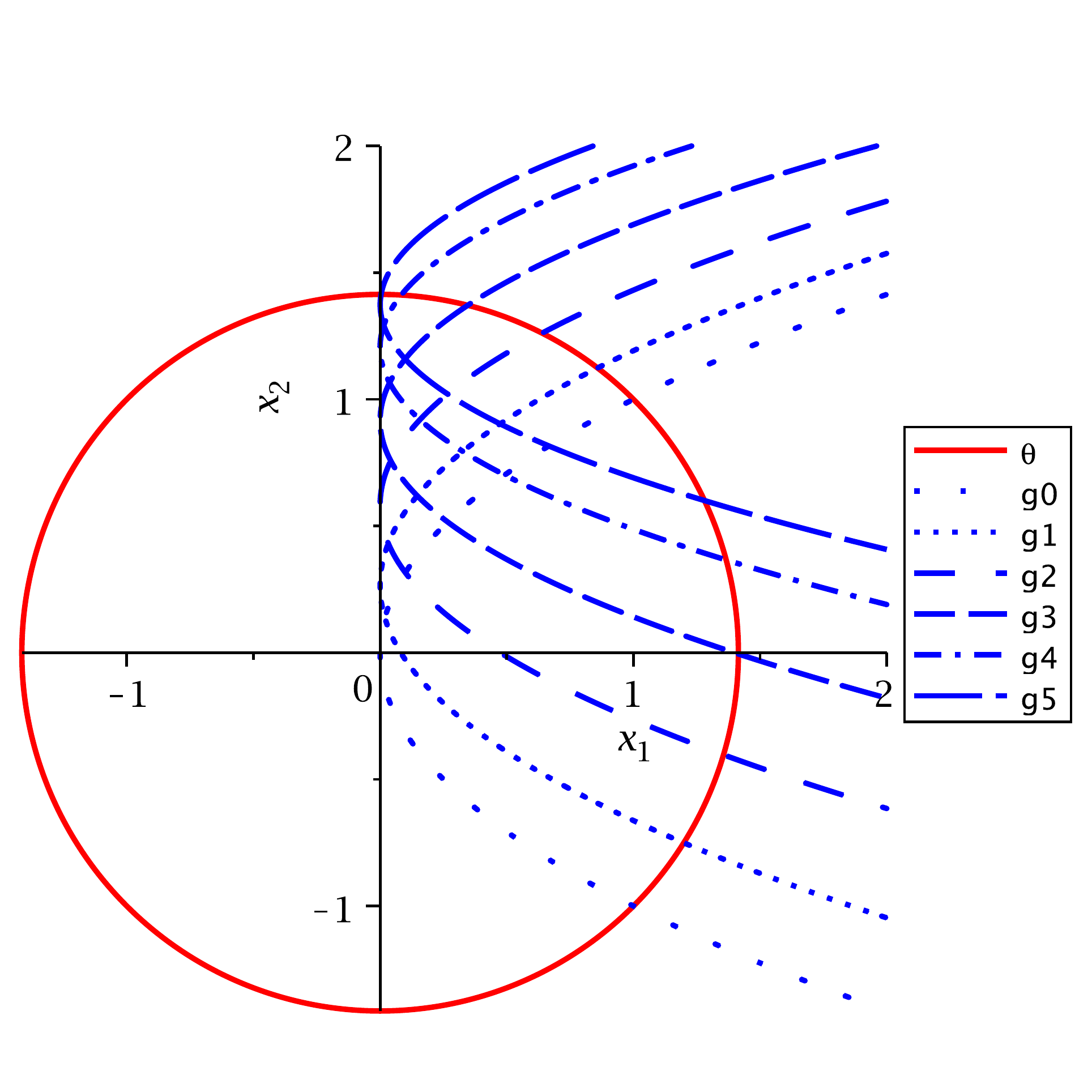}
\caption{Feasible region of Example \ref{counterexample} at each
iteration.
}\label{ex}
\end{figure}
\hfill$\square$
\end{example}
\addtocounter{theorem}{6}  We would like to point out that if $U$ is
not closed at $\infty$, we might have $\tilde{f}^*>f^*$. For
example,
\begin{example}\label{counter:example}
Consider the following SIPP problem:
\begin{equation}\label{notclose:example}
\left\{
\begin{aligned}
f^*:= \min\limits_{x\in \mR}&\ x^2\\
\text{s.t.}&\ x(u_1-u_2+1)\geq 0, ~\forall~u\in U,\\
&\ x\in [1,2],
\end{aligned}
\right.
\end{equation}
where
\[
U=\{u\in\mR^2:~u^2_1(u_1-u_2)-1=0\}.
\]
Since for all $u\in U$,
\[
g(1,u)=u_1-u_2+1 = \frac{1}{u^2_1}+1>0,
\]
$x^*=1$ is feasible and furthermore the minimizer of problem
(\ref{notclose:example}). Hence, $f^*=1$. By definition,
\begin{equation*}
\widetilde{U} = \{\tilde{u}\in
\mR^3:~u^2_1(u_1-u_2)-u^3_0=0,~u_0\geq 0,~u^2_0+u^2_1+u^2_2=1\}.
\end{equation*}
As is shown in \cite{FENGUO,NieDiscri}, $U$ is not closed at
$\infty$ because there exists a point $(0,0,1)\in \widetilde{U}$ but
$(0,0,1)\notin {\sf closure}(U_{0})$. Since for any $x\in [1,2]$,
\[
\tilde{g}(x,(0,0,1))=-x<0,
\]
we have $\widetilde{M}=\emptyset$. Therefore, $\tilde{f}^* =\infty
>f^*$.\hfill$\square$
\end{example}
Example \ref{counter:example} shows that the problem
$(\widetilde{P})$ might not be equivalent to $(P)$ when set $U$ is
not closed at $\infty$. In the following, however, we show that $U$
is closed at $\infty$ in general. In other words, $U$ is closed at
$\infty$ if it is defined by generic polynomials.

Suppose that $U$ is not closed at $\infty$, then by definition there
exists $(0,\bar{u})\in \widetilde{U}\backslash {\sf closure}(U_0)$
with $0\neq \bar{u}\in \mR^p$. Let $\hat{h}_i$ denote the
homogeneous part of highest degree of $h_i$ for $i\in[m_1]$ and
\[
\{j_1,\ldots,j_{\ell}\}:=\{j\in
[m_1]\mid\tilde{h}_j(0,\bar{u})=\hat{h}_j(\bar{u})=0\}.
\]
Then $\bar{u}$ is a solution to the polynomial system
\begin{equation}\label{eq::J}
\hat{h}_{j_1}(\bar{u})=\cdots=\hat{h}_{j_{\ell}}(\bar{u})=\Vert
\bar{u}\Vert^2-1=0.
\end{equation}
The Jacobian matrix of the system (\ref{eq::J}) at $\bar{u}$ is
\begin{equation*}
A(u):= \bbm \frac{\partial \hat{h}_{j_1}}{\partial u_1}(\bar{u}) &
\cdots & \frac{\partial\hat{h}_{j_1}}{\partial u_p}(\bar{u})
\\ \vdots & \vdots & \vdots \\
\frac{\partial \hat{h}_{j_{\ell}}}{\partial u_1}(\bar{u}) & \cdots &
\frac{\partial \hat{h}_{j_{\ell}}}{\partial u_p}(\bar{u})\\
2\bar{u}_1 & \cdots & 2\bar{u}_p
 \ebm
\end{equation*}
\begin{lemma} \emph{(}\cite[Lemma 2.10]{FENGUO}\emph{)} Suppose $U$ is not
closed at $\infty$ and
$\ell <p$, then $\text{rank} ~A(u)<\ell+1$.
\end{lemma}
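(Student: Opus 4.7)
The plan is to proceed by contradiction: assume $\text{rank}\,A(\bar{u})=\ell+1$ and use the implicit function theorem to manufacture a sequence in $U_0$ converging to $(0,\bar{u})$, contradicting $(0,\bar{u})\notin{\sf closure}(U_0)$.

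First, I would set up the implicit function theorem correctly. Define the map $\Phi:\mR^{p+1}\to\mR^{\ell+1}$ by
\[
\Phi(u_0,u)=\bigl(\tilde{h}_{j_1}(u_0,u),\ldots,\tilde{h}_{j_\ell}(u_0,u),\,u_0^2+\|u\|^2-1\bigr).
\]
Since $(0,\bar{u})\in\widetilde{U}$ with $\|\bar{u}\|^2=1$, we have $\Phi(0,\bar{u})=0$. The key computation is that $\partial\tilde{h}_{j_k}/\partial u_i$ evaluated at $(0,\bar{u})$ only picks up the top-degree homogeneous part, yielding $\partial\hat{h}_{j_k}/\partial u_i(\bar{u})$; the last row contributes $2\bar{u}_i$. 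Hence the partial Jacobian of $\Phi$ with respect to the $u$-variables at $(0,\bar{u})$ is exactly $A(\bar{u})$. Since $\ell+1\le p$ and this matrix is assumed to have rank $\ell+1$, IFT (after reordering the $u$-coordinates so that an $(\ell+1)\times(\ell+1)$ minor of $A(\bar{u})$ is invertible, and fixing the remaining $p-\ell-1$ coordinates at their values in $\bar{u}$) provides a smooth curve $u(u_0)$ defined in a neighborhood of $u_0=0$, with $u(0)=\bar{u}$, such that $\Phi(u_0,u(u_0))\equiv 0$.

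Next I would handle the inactive constraints. For every index $i\notin\{j_1,\ldots,j_\ell\}$ the definition of the $j_k$ gives $\tilde{h}_i(0,\bar{u})=\hat{h}_i(\bar{u})>0$, so by continuity $\tilde{h}_i(u_0,u(u_0))>0$ for all sufficiently small $u_0$. Combined with the active constraints $\tilde{h}_{j_k}(u_0,u(u_0))=0$ and the normalization $u_0^2+\|u(u_0)\|^2=1$, this shows $(u_0,u(u_0))\in U_0$ whenever $u_0>0$ is small. Letting $u_0\to 0^+$ yields a sequence in $U_0$ converging to $(0,\bar{u})$, so $(0,\bar{u})\in{\sf closure}(U_0)$, contradicting the hypothesis that $U$ is not closed at infinity. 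Therefore $\text{rank}\,A(\bar{u})<\ell+1$.

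The only real obstacle I foresee is the bookkeeping in the IFT step: one has to confirm that the full-rank minor selected from $A(\bar{u})$ (involving the gradient rows of the $\hat{h}_{j_k}$ together with the $2\bar{u}$ row) corresponds to a genuinely invertible sub-block of the $u$-Jacobian of $\Phi$ at $(0,\bar{u})$, and that after freezing the remaining $p-\ell-1$ entries of $u$ the resulting square system can be solved smoothly for $u_0$ in a one-sided neighborhood of $0$. Both points follow immediately from the computation that the relevant partial derivatives of $\Phi$ in the $u$-direction coincide with $A(\bar{u})$, so no additional analytic machinery is needed beyond the standard IFT.
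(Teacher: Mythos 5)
Your proof is correct. Note that the paper itself does not prove this lemma --- it is quoted from \cite[Lemma 2.10]{FENGUO} without proof --- so there is no in-paper argument to compare against; your contradiction argument is the natural one: the computation that the $u$-partial Jacobian of $\bigl(\tilde{h}_{j_1},\ldots,\tilde{h}_{j_\ell},\|\tilde{u}\|^2-1\bigr)$ at $(0,\bar{u})$ reduces to $A(\bar{u})$ (only the top-degree parts $\hat{h}_{j_k}$ survive at $u_0=0$) is right, the hypothesis $\ell<p$ is exactly what lets the implicit function theorem solve for $\ell+1$ of the $u$-coordinates in terms of $u_0$, and continuity handles the inactive constraints, so the resulting curve with $u_0>0$ lies in $U_0$ and contradicts $(0,\bar{u})\notin{\sf closure}(U_0)$.
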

Let $\hat{h}_{m_1+1} := \Vert\tilde{u}\Vert^2-1$ and
$J(\bar{u})=\{j_1,\ldots,j_{\ell},m_1+1\}$. We review some
background about \emph{resultants} and \emph{discriminants} in
Appendix \ref{appendix}. By Proposition \ref{prop::res4inhom} and
Proposition \ref{prop::dis4inhom}, we have
\begin{theorem} If $U$ is not closed at $\infty$, then
\begin{enumerate}[\upshape (a)]
\item If $|J(\bar{u})|>p$, then for every subset
$\{j_1,\cdots,j_{p+1}\}\subseteq J(\bar{u})$,
\[
Res(\hat{h}_{j_1},\cdots,\hat{h}_{j_{p+1}})=0.
\]
\item If $|J(\bar{u})|\le p$, then
$\Delta(\hat{h}_{j_1},\cdots,\hat{h}_{j_{\ell}},\hat{h}_{m_1+1})
=0$.
\end{enumerate}
\end{theorem}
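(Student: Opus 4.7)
The plan is to read off the conclusion directly from the structural data of the counterexample $(0,\bar{u})$ together with the resultant/discriminant criteria recalled in Appendix \ref{appendix}. Nothing delicate happens beyond translating the right setup into the hypotheses of Propositions \ref{prop::res4inhom} and \ref{prop::dis4inhom}.

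First I would unpack what the failure of closedness at $\infty$ buys us. By definition there exists $(0,\bar{u})\in\widetilde{U}\setminus\mathsf{closure}(U_0)$ with $\bar{u}\ne 0$; the index set $J(\bar{u})=\{j_1,\ldots,j_\ell,m_1+1\}$ was chosen precisely so that the leading homogeneous forms $\hat{h}_{j_1},\ldots,\hat{h}_{j_\ell}$ all vanish at $\bar{u}$, and the sphere equation $\hat{h}_{m_1+1}$ vanishes at $\bar{u}$ by normalization. Thus $\bar{u}$ is a common nontrivial zero of the family $\{\hat{h}_j:j\in J(\bar{u})\}$, and all of these polynomials are homogeneous in the $p$ variables $u_1,\ldots,u_p$.

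For part (a), with $|J(\bar{u})|>p$, fix any subset $\{j_1,\ldots,j_{p+1}\}\subseteq J(\bar{u})$. These are $p+1$ homogeneous polynomials in $p$ variables sharing the nonzero common root $\bar{u}$. This is exactly the overdetermined situation covered by Proposition \ref{prop::res4inhom}, which says that the multipolynomial resultant of any such family vanishes. Hence $Res(\hat{h}_{j_1},\ldots,\hat{h}_{j_{p+1}})=0$, giving (a).

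For part (b), we are in the regime $|J(\bar{u})|\le p$, i.e.\ $\ell<p$, so the preceding lemma applies and the Jacobian $A(\bar{u})$ of $(\hat{h}_{j_1},\ldots,\hat{h}_{j_\ell},\hat{h}_{m_1+1})$ satisfies $\text{rank}\ A(\bar{u})<\ell+1$. This rank deficiency at the common zero $\bar{u}$ is precisely the singularity condition that forces the discriminant to vanish, which is the statement of Proposition \ref{prop::dis4inhom}. So $\Delta(\hat{h}_{j_1},\ldots,\hat{h}_{j_\ell},\hat{h}_{m_1+1})=0$, giving (b).

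The only real obstacle is making sure the appendix's normalizations of $Res$ and $\Delta$ are stated in the ``common projective zero $\Rightarrow$ resultant vanishes'' and ``singular common zero $\Rightarrow$ discriminant vanishes'' forms I am invoking here; assuming the two propositions are phrased in that standard way (and, in part (b), that the sphere form $\hat{h}_{m_1+1}$ is allowed in the list), the theorem follows immediately from the observations above, with no additional computation needed.
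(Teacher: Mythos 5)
Your argument is exactly the paper's (largely implicit) proof: the common zero $\bar{u}$ of the system \eqref{eq::J} feeds Proposition \ref{prop::res4inhom} in case (a), and the rank deficiency of $A(\bar{u})$ from the cited lemma feeds Proposition \ref{prop::dis4inhom} in case (b), both of which are indeed stated in the ``common zero $\Rightarrow$ $Res=0$'' and ``singular common zero $\Rightarrow$ $\Delta=0$'' forms for inhomogeneous polynomials via homogenization. The only slip is calling every member of the family homogeneous --- $\hat{h}_{m_1+1}=\Vert u\Vert^2-1$ is not --- but this is immaterial since the propositions you invoke require only a common (complex) solution, not homogeneity.
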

The above theorem shows that if $U$ is defined by some generic
polynomials, then it is closed at $\infty$. Hence, the assumption
that $U$ is closed at $\infty$ is a generic condition. Therefore,
SIPP problems $(P)$ and $(\widetilde{P})$ are {\itshape equivalent}
in general.

\begin{exm}\label{example:noncompact:1} Consider the following problem
 \begin{equation}\label{eq::noncompact}
 \left\{ \begin{aligned}
\min\limits_{x\in X}&\ f(x)=x^2_1+x^2_2\\
\text{s.t.}&\ g(x,u)=x_1u_1+u_2+x_2\geq 0,\ \ \forall~u\in U,\\
\end{aligned}
\right.
\end{equation}
where
\[
X:=\{(x_1,x_2)\in \RR^2\mid x^2_1+x^2_2\leq 4\}\ \ \text{and}\ \
U:=\{(u_1,u_2)\in \RR^2\mid u^3_1+u^3_2-3u_1u_2\geq 0\}.
\]
The set $U$ is shown shaded in Figure \ref{figure:h:with:x}. Since
$u_1+u_2+1=0$ is the asymptote of the curve $u_1^3+u_2^3-3u_1u_2=0$,
the inequality $g(x,u)\ge 0$ for all $u\in U$ requires $x_1=1$ and
$x_2\ge 1$. Therefore, the feasible set of (\ref{eq::noncompact}) is
$\{x\in\RR^2\mid x_1=1, 1\le x_2\le\sqrt{3}\}$ and the global
minimizer is $x^*=(1, 1)$. It is easy to see that for a given
$(\bar{x}_1,\bar{x}_2)\in X$, the global minimum of $g(\bar{x},u)$
over $U$ is either $-\infty$ or finite but not achievable.
Therefore, by the discussion at the beginning of this section,
Algorithm \ref{alg:exchange:method} might fail to solve
(\ref{eq::noncompact}). For example, if we set $U_0=\{(1, -1)\}$,
then we get minimizer $X^*=\{(0.5000,0.4999)\}$; if $U_0=\{(1,
0)\}$, then $X^*=\{(0.0262, 0.3086)\times 10^{-5}\}$.

Now we use the homogenization technique to reformulate
(\ref{eq::noncompact}). First, we show that $U$ is closed at
$\infty$. Let
\begin{equation*}
\begin{aligned}
U_0&=\{(u_0,u_1,u_2)\in \RR^3|u^3_1+u^3_2-3u_1u_2u_0\geq
0,~u^2_0+u^2_1+u^2_2=1,~u_0>0\},\\
\widetilde{U}&=\{(u_0,u_1,u_2)\in \RR^3|u^3_1+u^3_2-3u_1u_2u_0\geq
0,~u^2_0+u^2_1+u^2_2=1,~u_0\geq 0\}.
\end{aligned}
\end{equation*}
By definition, if $U$ is not closed at $\infty$, then there exists
$(0,\bar{u}_1,\bar{u}_2)\in \widetilde{U}\backslash{\sf
closure}(U_0)$ which implies
\begin{equation*}
\bar{u}^3_1+\bar{u}^3_2=0,\quad \bar{u}^2_1+\bar{u}^2_2=1.
\end{equation*}
Therefore
\begin{equation*}
(\bar{u}_1,\bar{u}_2)\in\left\{\left(-\frac{\sqrt{2}}{2},\frac{\sqrt{2}}{2}\right),\
\left(\frac{\sqrt{2}}{2},-\frac{\sqrt{2}}{2}\right)\right\}.
\end{equation*}
Let
\[
\tilde{u}_k:=\left(\sqrt{2\varepsilon_k},-\sqrt{\frac{1}{2}-\varepsilon_k},
\sqrt{\frac{1}{2}-\varepsilon_k}\right),\quad
\hat{u}_k:=\left(\sqrt{2\varepsilon_k},\sqrt{\frac{1}{2}-\varepsilon_k},
-\sqrt{\frac{1}{2}-\varepsilon_k}\right).
\]
Let $\varepsilon_k\rightarrow 0$, then $\tilde{u}_k, \hat{u}_k\in
U_0$ for all $k$ large enough and
\[
\lim\limits_{k\rightarrow\infty}\tilde{u}_k=\left(0,-\frac{\sqrt{2}}{2},\frac{\sqrt{2}}{2}\right),
\quad
\lim\limits_{k\rightarrow\infty}\hat{u}_k=\left(0,\frac{\sqrt{2}}{2},-\frac{\sqrt{2}}{2}\right).
\]
This shows $U$ is closed at $\infty$. Therefore, by homogenization,
we reformulate (\ref{eq::noncompact}) as the following equivalent
problem
\begin{equation*}\label{one:example:noncompact:re}
 \left\{
\begin{aligned}
  \min\limits_{x\in X} &\ x^2_1+x^2_2\\
\text{s.t.}&\ \tilde{g}(x,\tilde{u})=x_1u_1+u_2+x_2u_0\geq 0,~ \tilde{u}\in \widetilde{U}.\\
\end{aligned}
\right.
\end{equation*}
By Algorithm \ref{alg:exchange:method}, we find a global minimizer
\[
x^*\approx (0.9999,0.9998)\quad
\text{with}\quad \text{Obj}_2=-9.8148\times 10^{-7},
\]
after several iterations. \hfill$\square$
 \begin{figure}
\includegraphics[width=0.35\textwidth]{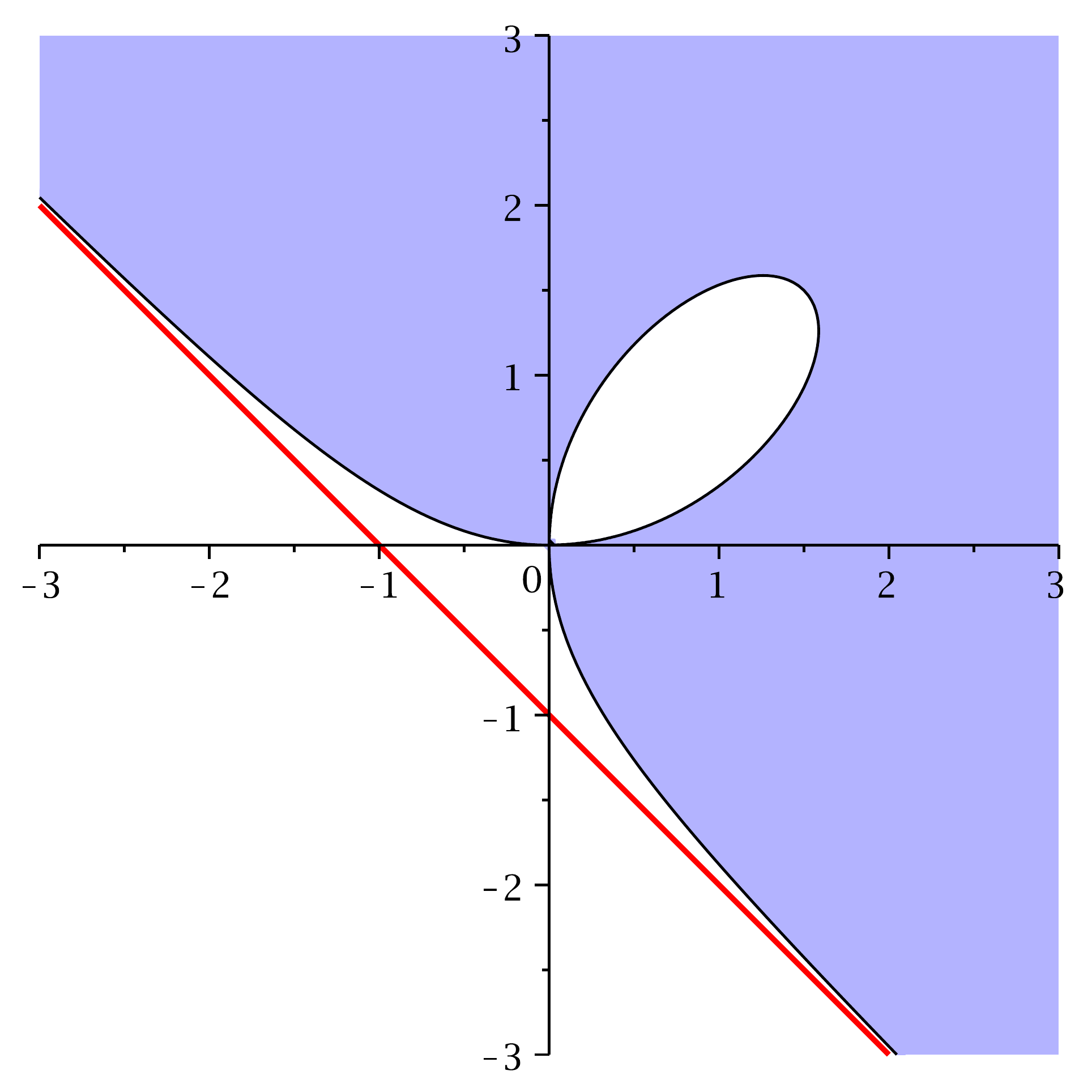}
\caption{ The feasible region $U$ in Example
\ref{example:noncompact:1}.}\label{figure:h:with:x}
\end{figure}
\end{exm}
In this section, by homogenization technique, we reformulate the SIPP problem
$(P)$ with noncompact index set $U$ as the problem $(\widetilde{P})$ with
compact index set $\widetilde{U}$ which can be globally solved by Algorithm
\ref{alg:exchange:method}.  Under the assumption that set $U$ is closed at
$\infty$ which is a generic condition, we show the two problems are equivalent.

\appendix
\section{Small SIPP examples}\label{appendix:small:exm}
\begin{exm}\label{small:example:1}
Let $U=[0,2]$ and
 \[
f(x)=\frac{1}{3}x^2_1 + \frac{1}{2} x_1+ x^2_2-x_2,\quad
g(x,u)=-x^2_1-2x_1x_2u^2+\sin(u).
\]
Replace the function $\sin(u)$
by $u-\frac{u^3}{6}$.
\end{exm}

\begin{exm}\label{small:example:2}
 Let $U=[0,1]$ and
 \[
f(x)=\frac{1}{3}x^2_1 + x^2_2+\frac{1}{2}x_1,\quad
g(x,u)=-(1-x^2_1u^2)^2+x_1u^2+x_2^2-x_2.
\]
  \end{exm}

 \begin{exm}\label{small:example:3}
 Let $U=[0,1]$ and
 \[
f(x)=x^2_1 + x^2_2 + x^2_3,\quad
g(x,u)=-x_1-x_2e^{x_3u}-e^{2u}+2\sin(4u).
\]
Replace function $e^{x_3u}$ by $ 1+ x_3u + \frac{1}{2}x^2_3u^2 +
\frac{1}{6}x^3_3u^3+\frac{1}{24}x^4_3u^4$, function $e^{2u}$ by $
1+ 2u+ 2u^2 +\frac{4}{3}u^3 +\frac{2}{3}u^4 $, and function
$\sin(4u)$ by $4u-\frac{32}{3}u^3$.
\end{exm}

 \begin{exm}\label{small:example:4}
Let $U=[0,1]^2$ and
\[
f(x)= x_1^2+ x^2_2 + x^2_3,\quad
g(x,u)=-x_1(u_1+u^2_2+1)-x_2(u_1u_2-u_2^2)-x_3(u_1u_2+u^2_2+u_2)-1.
\]
 \end{exm}

\begin{exm}\label{small:example:5}
Let $U=[0,\pi]$ and
\[
f(x)= x^2_2 - 4x_2,\quad g(x,u)=-x_1\cos(u)-x_2\sin(u)+1.
\]
Replace function $\sin(u)$ by $u-\frac{1}{6}u^3$ and $\cos(u)$ by
$1-\frac{1}{2}u^2 + \frac{1}{24}u^4$.
\end{exm}

\begin{exm}\label{small:example:6}
Let $U=[0,\pi]$ and
\begin{equation*}
\begin{aligned}
f(x)&=(x_1+x_2-2)^2 + (x_1-x_2)^2 +30\min(0,(x_1-x_2))^2,\\
g(x,u)&=-x_1\cos(u)-x_2\sin(u)+1.
\end{aligned}
\end{equation*}
Like in \cite{Lasserre2011}, let $x_3 =\min(0,(x_1-x_2))$, then
$f(x) = (x_1+x_2-2)^2 + (x_1-x_2)^2 +30x_3^2$. We add new
constraints $x^2_3 = (x_1-x_2)^2$ and $x_3\geq 0$ in $X$. Replace
function $\sin(u)$ by $ u-\frac{u^3}{6}+\frac{y^5}{5!}$.
\end{exm}

\begin{exm}\label{small:example:7}
Let $U=[-1,1]$ and
\[
f(x)=x_2,\quad g(x,u)=-2x^2_1u^2+u^4-x^2_1+x_2.
\]
\end{exm}

\section{Resultants and discriminants}\label{appendix}
We review some background about \emph{resultants} and
\emph{discriminants}. More details can be found in
\cite{discriandresultant,NieDiscri,Nie2011Jacobian}.

Let $f_{1},\ldots,f_{n}$ be homogeneous polynomials in
$x=(x_1,\ldots,x_n)$. The resultant $Res(f_{1},\ldots,f_{n})$ is a
polynomial in the coefficients of $f_{1},\ldots,f_{n}$ satisfying
\begin{equation*}\label{resultant:property}
Res(f_{1},\ldots,f_{n})=0~~\Leftrightarrow~~\exists~0\neq u\in
\mC^n,~f_1(u)=\cdots=f_{n}(u)=0.
\end{equation*}
Let $f_{1},\ldots,f_{m}$ be homogenous polynomials with $m< n$. The
discriminant for $f_{1},\ldots,f_{m}$ is denoted by
$\Delta(f_1,\ldots,f_m)$, which is a polynomial in the
coefficients of $f_1,\ldots,f_m$ such that
$$\Delta(f_1,\ldots,f_m)=0$$ if and only if the polynomial system
$$f_1(x)=\cdots=f_m(x)=0$$ has a solution $0\neq u\in \mC^n$ such
that the Jacobian matrix of $f_1,\ldots,f_m$ does not have full
rank.

Given inhomogeneous polynomial $h(x)\in\mR[x]$, let $\tilde{h}$ denote the
homogenization of $h$, i.e.,
$\tilde{h}=\tilde{h}(\tilde{x})=x^{\deg(h)}_0h(x/x_0)$. For
inhomogeneous polynomials $f_0, f_1, \ldots, f_n\in\mR[x]$, the resultant
$Res(f_0, f_1, \ldots, f_n)$ is defined to be
\[
Res(\tilde{f}_0, \tilde{f}_1, \ldots, \tilde{f}_{n}).
\]
For inhomogeneous polynomials $f_1,\ldots,f_m\in\mR[x]$ with $m\le n$, the
discriminant $\Delta(f_1,\ldots,f_m)$ is defined as
\[
\Delta(\tilde{f}_1,\ldots,\tilde{f}_{m}).
\]
We have
\begin{prop}\label{prop::res4inhom}
Let $f_0, f_{1},\ldots,f_{n}\in\mR[x]$ be inhomogeneous polynomials. Suppose the
polynomial system
\[
f_0(x)=f_1(x)=\cdots=f_n(x)=0
\]
has a solution in $\mC^n$, then
\[
Res(f_0,f_1,\ldots,f_n)=0.
\]
\end{prop}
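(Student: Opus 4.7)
The plan is to unwind the definition of the resultant for inhomogeneous polynomials, reducing the claim to the standard vanishing property for the resultant of homogeneous polynomials stated earlier in Appendix \ref{appendix}.

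First, I would recall that by definition
\[
Res(f_0, f_1, \ldots, f_n) = Res(\tilde{f}_0, \tilde{f}_1, \ldots, \tilde{f}_n),
\]
where each $\tilde{f}_i(\tilde{x}) = x_0^{\deg(f_i)} f_i(x/x_0)$ is the homogenization of $f_i$ in the variables $\tilde{x} = (x_0, x_1, \ldots, x_n)$. The key fact available from the review of resultants is that for homogeneous polynomials, $Res(\tilde{f}_0, \ldots, \tilde{f}_n) = 0$ if and only if the system $\tilde{f}_0 = \cdots = \tilde{f}_n = 0$ admits a nonzero common zero in $\mathbb{C}^{n+1}$.

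The main step is then to exhibit such a nonzero common zero from the hypothesis. Let $a \in \mathbb{C}^n$ be the given common root of $f_0, f_1, \ldots, f_n$. Set $\tilde{a} = (1, a) \in \mathbb{C}^{n+1}$. Then $\tilde{a} \neq 0$, and for each $i$,
\[
\tilde{f}_i(\tilde{a}) = 1^{\deg(f_i)} f_i(a/1) = f_i(a) = 0.
\]
This produces a nontrivial common zero of the homogenizations, and the characterization of the resultant immediately yields $Res(\tilde{f}_0, \ldots, \tilde{f}_n) = 0$, i.e., $Res(f_0, f_1, \ldots, f_n) = 0$.

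There is no real obstacle here: the only thing to be careful about is making sure the constructed $\tilde{a}$ is genuinely nonzero (guaranteed by its first coordinate being $1$) and that evaluating a homogenization at a point with $x_0 = 1$ recovers the original polynomial. Everything else is a direct appeal to the definition of the inhomogeneous resultant and the standard vanishing criterion for the homogeneous one.
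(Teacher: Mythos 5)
Your argument is correct and matches the paper's own proof exactly: both pass to the homogenizations, observe that a common root $a\in\mathbb{C}^n$ of $f_0,\ldots,f_n$ yields the nonzero common zero $(1,a)\in\mathbb{C}^{n+1}$ of $\tilde{f}_0,\ldots,\tilde{f}_n$, and then invoke the vanishing characterization of the resultant for homogeneous polynomials. No gaps to report.
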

\begin{proof}
If the polynomial system
\[
f_0(x)=f_1(x)=\cdots=f_n(x)=0
\]
has a solution $u\in\mC^n$, then the
polynomial system
\[
\tilde{f}_0(\tilde{x})=\tilde{f}_1(\tilde{x})=\cdots=\tilde{f}_n(\tilde{x})=0
\]
has a nonzero solution $(1,u)\in\mC^{n+1}$. The conclusion follows
by the properties of resultant for homogeneous polynomials .
\end{proof}
\begin{prop}\label{prop:equaivalent:ho} Let $m\le n$.
The polynomial system $$f_1(x)=\cdots=f_m(x)=0$$ has a solution
$u\in\mC^n$ such that the Jacobian matrix of $f_1,\ldots,f_m$ is
rank deficient at $u$ if and only if the polynomial system
$$\tilde{f}_1(\tilde{x})=\cdots=\tilde{f}_m(\tilde{x})=0$$ has a solution $(1,u)\in
\mC^{n+1}$ such that the Jacobian matrix of
$\tilde{f}_1,\ldots,\tilde{f}_m$ is rank deficient at $(1,u)$.
\end{prop}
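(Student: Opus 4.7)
My plan is to establish a direct correspondence between the evaluation of the original system and its homogenization on the affine chart $x_0 = 1$, and then to relate the Jacobians at corresponding points via Euler's identity for homogeneous polynomials.

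First I would record the basic identity $\tilde f_i(1,u) = f_i(u)$ which follows immediately from the definition $\tilde f_i(\tilde x) = x_0^{\deg(f_i)} f_i(x/x_0)$. Thus $u \in \mathbb C^n$ is a solution of $f_1=\cdots=f_m=0$ if and only if $(1,u) \in \mathbb C^{n+1}$ is a solution of $\tilde f_1=\cdots=\tilde f_m=0$. This handles the correspondence of zero sets.

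Next I would compare the two Jacobians. For $j \in [n]$, differentiation under the substitution $x = \tilde x/ x_0$ followed by evaluation at $x_0 = 1$ shows
\[
\frac{\partial \tilde f_i}{\partial x_j}(1,u) = \frac{\partial f_i}{\partial x_j}(u), \qquad i \in [m],\ j \in [n].
\]
So the last $n$ columns of $\mathrm{Jac}(\tilde f)(1,u)$ form exactly $\mathrm{Jac}(f)(u)$. For the remaining column I would invoke Euler's identity: since each $\tilde f_i$ is homogeneous of degree $d_i := \deg(f_i)$,
\[
d_i\, \tilde f_i(\tilde x) = \sum_{j=0}^{n} x_j\, \frac{\partial \tilde f_i}{\partial x_j}(\tilde x).
\]
Evaluating at $(1,u)$, and using $\tilde f_i(1,u) = f_i(u) = 0$ at a common zero, yields
\[
\frac{\partial \tilde f_i}{\partial x_0}(1,u) = -\sum_{j=1}^{n} u_j \frac{\partial f_i}{\partial x_j}(u).
\]
Hence the $x_0$-column of $\mathrm{Jac}(\tilde f)(1,u)$ is a fixed $\mathbb C$-linear combination of its remaining columns, namely $-\mathrm{Jac}(f)(u)\cdot u$.

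The conclusion is then immediate: since the $x_0$-column lies in the span of the other columns, $\mathrm{rank}\,\mathrm{Jac}(\tilde f)(1,u) = \mathrm{rank}\,\mathrm{Jac}(f)(u)$. Therefore the Jacobian of $\tilde f_1,\ldots,\tilde f_m$ is rank deficient at $(1,u)$ if and only if the Jacobian of $f_1,\ldots,f_m$ is rank deficient at $u$, which proves both directions of the equivalence. There is no substantial obstacle here; the proof is essentially a careful bookkeeping of partial derivatives together with Euler's identity, which is the only place where the hypothesis $\tilde f_i(1,u) = 0$ is used.
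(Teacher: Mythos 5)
Your proof is correct and rests on exactly the same two ingredients as the paper's: the identity $\frac{\partial \tilde f_i}{\partial x_j}(1,u)=\frac{\partial f_i}{\partial x_j}(u)$ for $j\in[n]$ and Euler's identity applied at a common zero to control $\frac{\partial \tilde f_i}{\partial x_0}(1,u)$. The only difference is packaging: you phrase it as the $x_0$-column being $-\mathrm{Jac}(f)(u)\,u$, so both matrices have equal rank and the two directions follow at once, whereas the paper handles the ``if'' direction by a submatrix rank observation and the ``only if'' direction by transferring a vanishing row combination $c_1,\ldots,c_m$ via the same Euler computation.
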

\begin{proof}

Let $d_i=\deg_x{(f_i)}$, $f_{i,j}$ denote the homogenous part
of degree $j$ of polynomial $f_i$ and $\tilde{f}_{i,j}=x_0^{d_i-j}f_{i,j}$ for
$i=1,\cdots,m$ and $j=0,\cdots,d_i$. Denote
\[
\nabla_x:=\left\{\frac{\partial}{\partial x_1}, \cdots, \frac{\partial}{\partial
x_n}\right\}\quad\text{and}\quad \nabla_{\tilde{x}}:=\left\{\frac{\partial}{\partial x_0},
\frac{\partial}{\partial x_1}, \cdots, \frac{\partial}{\partial x_n}\right\}.
\]
The ``if'' direction is implied by
\begin{equation}\label{eq::rl}
\frac{\partial\tilde{f}_i}{\partial x_j}(1,u)=\frac{\partial f_i}{\partial
x_j}(u),\quad i=1,\cdots,m,\ j=1,\cdots,n.
\end{equation}
Next we prove the ``only if'' direction. By assumption, there exists
a set of $n$ scalars $c_1,\ldots, c_n$, not all zero, such that
\[
\sum_{i=1}^m c_i(\nabla_x f_i)(u)=0
\]
which means
\[
\sum_{i=1}^mc_i \left(\sum_{j=1}^{d_i}\frac{\partial f_{i,j}}{\partial
x_k}(u)\right)=0,\quad k=1,\cdots,n.
\]
Then by Euler's Homogeneous Function Theorem, we have
\begin{equation*}
\begin{aligned}
0&=\sum_{k=1}^n\sum_{i=1}^mc_i\left(\sum_{j=1}^{d_i}\frac{\partial
f_{i,j}}{\partial x_k}(u)u_k\right)\\
&=\sum_{i=1}^mc_i\left(\sum_{j=1}^{d_i}\sum_{k=1}^n\frac{\partial
f_{i,j}}{\partial x_k}(u)u_k\right)\\
&=\sum_{i=1}^mc_i\left(\sum_{j=1}^{d_i}jf_{i,j}(u)\right)\\
&=\sum_{i=1}^mc_i\left(\sum_{j=1}^{d_i}jf_{i,j}(u)+
\sum_{j=0}^{d_i}(d_i-j)f_{i,j}(u)-\sum_{j=0}^{d_i}(d_i-j)f_{i,j}(u)\right)\\
&=\sum_{i=1}^mc_i\left(d_i\sum_{j=0}^{d_i}f_{i,j}(u)-
\sum_{j=0}^{d_i}\frac{\partial\tilde{f}_{i,j}}{x_0}(1,u)\right)\\
&=\sum_{i=1}^mc_i\left(d_if_i(u)-\frac{\partial\tilde{f}_{i}}{\partial
x_0}(1,u)\right)\\
&=-\sum_{i=1}^mc_i\frac{\partial\tilde{f}_{i}}{\partial
x_0}(1,u).
\end{aligned}
\end{equation*}
By combining (\ref{eq::rl}), we obtain
\[
\sum_{i=1}^m c_i(\nabla_{\tilde{x}} \tilde{f}_i)(1,u)=0
\]
which concludes the proof.
\end{proof}
By Proposition \ref{prop:equaivalent:ho} and the properties of
discriminant for homogeneous polynomials, we have
\begin{prop}\label{prop::dis4inhom}
Let $m\le n$ and $f_{1},\ldots,f_{m}\in\mR[x]$ be inhomogeneous
polynomials. Suppose that the polynomial system
\[
f_1(x)=\cdots=f_m(x)=0
\]
has a solution in $\mC^n$ at which the Jacobian matrix of $f_1,\ldots,f_m$
is rank deficient, then
\[
\Delta(f_1,\ldots,f_m)=0.
\]
\end{prop}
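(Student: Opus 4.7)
The plan is to chain together three ingredients already on the table: the definition of the discriminant for inhomogeneous polynomials via homogenization, the transfer lemma (Proposition \ref{prop:equaivalent:ho}) that turns a rank-deficient affine solution into a rank-deficient projective solution at $x_0=1$, and the defining property of the discriminant in the homogeneous setting.

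First, I would let $u\in\mC^n$ be a solution of $f_1(x)=\cdots=f_m(x)=0$ at which the Jacobian $[\partial f_i/\partial x_j](u)$ has rank less than $m$. By the ``only if'' direction of Proposition \ref{prop:equaivalent:ho}, the homogenized system $\tilde{f}_1(\tilde{x})=\cdots=\tilde{f}_m(\tilde{x})=0$ then admits the solution $(1,u)\in\mC^{n+1}$ at which the enlarged Jacobian $[\partial \tilde{f}_i/\partial x_k](1,u)$, $k=0,1,\ldots,n$, is still rank deficient. Since $(1,u)\neq 0$, this is a genuine nonzero projective solution with rank-deficient Jacobian, which is exactly the hypothesis triggering the vanishing of the discriminant for homogeneous polynomials recalled at the start of the appendix.

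Applying that property of the homogeneous discriminant yields $\Delta(\tilde{f}_1,\ldots,\tilde{f}_m)=0$. By the stated definition $\Delta(f_1,\ldots,f_m):=\Delta(\tilde{f}_1,\ldots,\tilde{f}_m)$, this gives $\Delta(f_1,\ldots,f_m)=0$, as required.

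There is essentially no obstacle: all the work is hidden in Proposition \ref{prop:equaivalent:ho}, which has already been established. The only thing to be careful about is simply citing Proposition \ref{prop:equaivalent:ho} in the correct direction (affine rank-deficient solution $\Rightarrow$ projective rank-deficient solution at $(1,u)$) and noting that $(1,u)\neq 0$ so that the homogeneous discriminant criterion genuinely applies.
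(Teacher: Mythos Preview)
Your proposal is correct and follows exactly the paper's approach: the paper simply states that the result follows from Proposition~\ref{prop:equaivalent:ho} together with the defining property of the discriminant for homogeneous polynomials, and you have spelled out precisely this chain of implications.
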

Note that the reverses of Proposition \ref{prop::res4inhom} and Proposition
\ref{prop::dis4inhom} are not necessarily true.

\def\refname{\Large\bfseries References}
\bibliographystyle{plain}
\bibliography{liwang2013}

\end{document}